\newcommand{\Ein}{{\mathrm {Ein}}}
\newcommand{\Eink}{{\mathrm {Ein}_k}}
\newcommand{\Riem}{{\mathrm {Riem}}}
\newcommand{\bRiem}{{\mathbf {Riem}}}
\newcommand{\riem}{{\mathrm {riem}}}
\newcommand{\briem}{{{\mathbf {riem}}}}
\newcommand{\Riemt}{{\mathrm {Riem}_t}}
\newcommand{\Riems}{{\mathrm {Riem}_s}}
\newcommand{\Scal}{{\rm Scal}}
\newcommand{\Ric}{{\rm Ric}}
\newcommand{\Sec}{{\rm Sec}}
\DeclareMathAlphabet{\mathpzc}{OT1}{pzc}{m}{it}
\def\lambdam{\lambda_{{\rm max}}}
\newtheorem{theorem}{Theorem}[section]
\newtheorem{corollary}[theorem]{Corollary}
\newtheorem{proposition}[theorem]{Proposition}
\newtheorem*{thm-a}{Theorem\! A}
\newtheorem*{thm-aa}{Theorem\! ${\rm A}^{\prime}$}
\newtheorem*{cor-bb}{Corollary\! ${\rm B}^{\prime}$}
\newtheorem*{thm-cc}{Theorem\! ${\rm C}^{\prime}$}
\newtheorem*{thm-bb}{Theorem\! ${\rm B}^{\prime}$}
\newtheorem*{cor-a}{Corollary\! {\rm A}}
\newtheorem*{thm-b}{Theorem\! {\rm B}}
\newtheorem*{cor-b}{Corollary\! {\rm B}}
\newtheorem*{thm-c}{Theorem\! {\rm C}}
\newtheorem*{cor-c}{Corollary\! {\rm C}}
\newtheorem*{thm-d}{Theorem\! {\rm }D}
\newtheorem*{thm-dd}{Theorem\! ${\rm D}^{\prime}$}
\newtheorem*{cor-dd}{Corollary\! ${\rm D}^{\prime}$}
\newtheorem*{cor-d}{Corollary\! {\rm D}}
\newtheorem*{thm-e}{Theorem\! {\rm E}}
\newtheorem*{cor-e}{Corollary\! {\rm E}}
\newtheorem*{thm-f}{Theorem\! {\rm F}}
\newtheorem*{cor-f}{Corollary\! F}
\newtheorem*{thm-g}{Theorem\! {\rm G}}
\newtheorem*{conjecture}{Conjecture}
\newtheorem*{conj-d}{Conjecture D}
\newtheorem*{conj-c}{Conjecture C}
\newtheorem{thm}{Theorem}[section]
\theoremstyle{definition}
\newtheorem{definition}[thm]{Definition}
\newtheorem*{remark}{Remark}
\newtheorem{example}{Example}[section]
\begin{document}
\author{Mohammed Larbi Labbi} 
\address{Department of Mathematics\\
 College of Science\\
  University of Bahrain\\
  32038, Bahrain.}
\email{mlabbi@uob.edu.bh}
\renewcommand{\subjclassname}{
  \textup{2020} Mathematics Subject Classification}
\subjclass[2020]{53C21, 53C18
}

\title{On a stratification of positive scalar curvature compact manifolds}  \maketitle
 \begin{abstract}
For a compact PSC Riemannian $n$-manifold $(M,g)$, the metric constant $\mathrm {Riem}(g)\in (0, \binom{n}{2}]$ is defined to be the infinimum over $M$ of the spectral scalar curvature $\frac{\sum_{i=1}^N\lambda_i}{\lambda_{\rm max}}$ of $g$, where $\lambda_1, ...,\lambda_N$ are the eigenvalues of the curvature operator of $g$ and $\lambda_{\rm max}$ is the maximal eigenvalue. The functional $g\to \mathrm {Riem}(g)$ is continuous, re-scale invariant and defines a stratification of the space of PSC metrics over $M$.
 We introduce as well the smooth constant $\mathbf {Riem}(M)\in (0, \binom{n}{2}]$, which is the supremum of $\mathrm {Riem}(g)$ over the set of all psc Riemannian metrics $g$ on $M$. \\
In this paper, we show that in the top layer, compact manifolds with $\mathbf{Riem}=\binom{n}{2}$ are positive space forms. No manifolds have their $\mathbf{Riem}$ in the interval $(\binom{n}{2}-2, \binom{n}{2})$.  The manifold $S^{n-1}\times S^1$ and  arbitrary connected sums of copies of it with connected sums of positive  space forms all have  $\mathbf{Riem}=\binom{n-1}{2}$.
For $1\leq p\leq n-2\leq 5$, we prove that the manifolds $S^{n-p}\times T^p$ take the intermediate values   $\mathbf {Riem}=\binom{n-p}{2}$.
From the bottom, we prove that simply connected (resp. $2$-connected, $3$-connected and non-string) compact manifolds of dimension $\geq 5$ (resp. $\geq 7$, $\geq 9$)  have $\mathbf{Riem}\geq 1$ (resp. $\geq 3$, $\geq 6$).  The proof of these last three results is based on surgery. In fact,  we prove that the smooth $\mathbf{Riem}$ constant doesn't decrease after a surgery on the manifold with adequate codimension.\\

\end{abstract}

\keywords{Keywords: positive scalar curvature, intermediate curvatures, $\Riem$  invariant, surgery theorem.}
\tableofcontents
\section{Introduction and statement of the main results}

\subsection{Introduction}
Throughout this paper, $(M,g)$ denotes a closed connected Riemannian manifold of dimension $n$. We denote   by $R$ and $\Scal$ respectively the Riemann curvature tensor and  the scalar curvature of $(M,g)$. We define a string of curvature tensors on  $(M,g)$ as follows
\begin{equation}
\Riemt (g):=\Scal \, \frac{g^2}{2} -2tR,
\end{equation}
Where $t$ is a constant real number, and $g^2$ is the square of $g$ with respect to the Kulkarni-Nomizu product. These curvatures are defined in a similar way as the tensors $\Eink$ in \cite{modified-Eink}.
We are interested in  the positivity properties of these curvature tensors. In order to ensure that their positivity is stronger then positive scalar curvature, we restrict the parameter $t$ to be less than $n(n-1)/2$. In fact, one can easily show that the full trace of $\Riemt$ equals $\bigl((n(n-1)-2t\bigr)\Scal.$\\
We remark that for a compact Riemannian $n$-manifold $(M,g)$ with positive scalar curvature and for $0<t<n(n-1)/2$, the tensor $\Riemt (g)$ is positive definite if and only if at each point of $M$ one has
\begin{equation}\label{lambdam}
t<\frac{\Scal}{2\lambdam},
\end{equation}
 where $\lambdam$ denotes the maximal eigenvalue of the Riemann curvature operator at the corresponding point. \\

A straightforward consequence of the above characterization of the positivity of the  tensors $\Riemt (g)$ is the following descent positivity property
\begin{equation}
{\rm For}\,\, 0<t<s<n(n-1)/2,\,\,\, \Riems>0 \Rightarrow \Riemt>0\Rightarrow \Scal >0.
\end{equation}

We therefore define the metric invariant 
\begin{equation}
\Riem(g):=\sup\{t \in (0, n(n-1)/2: \Riemt(g)>0\}.
\end{equation}
We set $\Riem(g)=0$  if the scalar curvature of $g$ is not positive. An immediate consequence of property (\ref{lambdam}), is that for a metric $g$ of positive scalar curvature one has
\begin{equation}
\Riem(g)=\inf_{M}\frac{\Scal(g)}{2\lambda_{{\rm max}}(g)}.
\end{equation}
The above formula shows in particular that the metric invariant $\Riem(g)$ is re-scale invariant, That is for any positive real number $t$, one has
\begin{equation}
\Riem(tg)=\Riem(g).
\end{equation}
\begin{example}
\begin{enumerate}
\item For a compact Riemannian $n$-manifold $(M,g)$ of positive constant sectional curvature, one has $\Riem(g)=\frac{n(n-1)}{2}.$ It has the maximal possible value. Conversely, if $\Riem(g)=\frac{n(n-1)}{2}$ then $g$ has positive constant sectional curvature.
\item For the standard product metric $g$ on the product $S^{n-p}\times T^p$  of a round sphere of curvature 1 with a flat torus one has $\Riem(g)=\frac{(n-p)(n-p-1)}{2}$.
\item For the standard product metric $g$ on the product $S^{n-p}\times H^p$  of a round sphere  of curvature $1$ with a hyperbolic space of curvature $-1$  one has $\Riem(g)=\frac{(n-1)(n-2p)}{2}$.
\item For the standard Fubiny-Study metric $g$ on $\mathbb{CP}^n$ one has  $\Scal(g)=4n(n+1)$ and $\lambda_{{\rm max}}(g)=2n+2$ \cite{Bour-Kar}, and consequently $\Riem(g)=n$.
\item For the standard Fubiny-Study metric $g$ on $\mathbb{HP}^n$ one has $\Scal(g)=16n(n+2)$ and $\lambda_{{\rm max}}(g)=4n$ \cite{Bour-Kar}, and consequently $\Riem(g)=2n+4$.
\end{enumerate}
\end{example}

The metric ${\rm Riem}$ invariant defines a {\emph{pre-order}} on the set of Riemannian metrics on $M$ as follows
\begin{equation}
g_1\preceq g_2 \,\,\, {\rm if}\,\,\, \Riem(g_1)\leq \Riem(g_2).
\end{equation}
This lead us naturally to the study of maximal metrics with respect to the above pre-order. \\
We define then a smooth invariant $\Riem(M)$ to be
\begin{equation}
\bRiem(M)= \sup\{\Riem(g)\colon  g\in {\mathcal M}\},
\end{equation}
where ${\mathcal M}$ denotes the space of all  Riemannian metrics on $M$. We set $\bRiem(M)=0$ if $M$ has no psc metrics. For the seek to provide the reader a better feeling of this invariant, we provide (prematurely) the following  examples whose proofs will be provided later in this paper.
\begin{example}
\begin{enumerate}
\item For all $n\geq 2$, It is easy to see that $\bRiem(S^n)=\bRiem(\mathbb{RP}^n)=\binom{n}{2}.$
\item For $r\geq 2$ and $n\geq 3$, connected sums of $r$-copies of $(\mathbb{RP}^n)$ have $\bRiem=\binom{n-1}{2}$.
\item For all $n\geq 3$, $\bRiem(S^{n-1}\times S^1)=\binom{n-1}{2}$. Furthermore, the connected sum of arbitrary $r$ copies of the previous manifold satisfies $\bRiem(\#_r(S^{n-1}\times S^1))=\binom{n-1}{2}$.
\item For $0\leq p\leq n-2\leq 5$, $\bRiem(S^{n-p}\times T^p)=\binom{n-p}{2}$.
\end{enumerate}
\end{example}
At this stage, we introduce the following terminology
\begin{definition}
 We shall say that a Riemannian metric $g$ on $M$ is \emph{maximal} or \emph{($\Riem$-maximal)} if $\Riem(g)\geq \Riem(h)$ for any Riemannian metric $h$ on $M$.

\end{definition}
For instance, in the above example the standard  product metric on $S^{n-1}\times S^1$ is maximal, while  the connected sum $\#_r(S^{n-1}\times S^1)$ has no maximal metrics for $r\geq n$.\\

In the literature one finds several notions of intermediate curvature conditions which are useful in different directions. The  author believes that the curvature conditions that are {\emph stable under surgeries, satisfy the ideal property with respect to the Cartesian product of manifolds and have a fiber bundle property (as below) are subtle and deserve a special attention. The point is that the above three properties are the keys to convert  geometrical classification problems to  topological ones. The typical example is of course positive scalar curvature condition and this approach was initiated by Gromov and Lawson  \cite{GroLa}.\\
 We will prove in the first part of this  paper  that the condition  of the existence of a Riemannian metric with $\Riem(g)>t$ on a given compact manifold satisfies all the three conditions. In the second part we shall show that  the same condition $\Riem>t$ imposes several restrictions on the topology of the manifold. 

\subsection{Statement of the main results of the paper}
The first result shows that the condition $\bRiem>t$ satisfies an ideal property with respect to Cartesian products
\begin{proposition}{\label{idealp}}
Let $M_1$ and $M_2$ be compact manifolds, then
$$\bRiem(M_1\times M_2) \geq \max\{ \bRiem(M_1), \bRiem(M_2)\}.$$
\end{proposition}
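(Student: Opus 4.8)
The plan is to exhibit, for any given psc metric $g_1$ on $M_1$ with $\Riem(g_1)$ close to $\bRiem(M_1)$, a psc metric on the product $M_1\times M_2$ whose $\Riem$ invariant is at least $\Riem(g_1)$ (and symmetrically for $M_2$). The natural candidate is a warped/scaled product $g_1 \oplus \varepsilon^2 g_2$ for a fixed auxiliary metric $g_2$ on $M_2$ and a small scaling parameter $\varepsilon>0$. By the formula $\Riem(g)=\inf_M \Scal(g)/(2\lambda_{\max}(g))$ established in the excerpt, it suffices to control the scalar curvature and the maximal eigenvalue of the curvature operator of the product metric as $\varepsilon\to 0$.

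First I would recall that for a Riemannian product the curvature operator is block-diagonal: $\widehat{R}_{g_1\oplus \varepsilon^2 g_2}$ decomposes as the sum of the curvature operator of $g_1$ acting on $\Lambda^2 T M_1$, the curvature operator of $\varepsilon^2 g_2$ acting on $\Lambda^2 TM_2$, and the zero operator on the mixed part $TM_1\otimes TM_2$. Scaling $g_2$ by $\varepsilon^2$ multiplies its curvature tensor by $\varepsilon^2$ but its metric — hence the identification of $2$-forms with operators — by $\varepsilon^2$ as well, so the eigenvalues of the curvature operator of $\varepsilon^2 g_2$ equal $\varepsilon^{-2}$ times those of $g_2$; likewise $\Scal(\varepsilon^2 g_2)=\varepsilon^{-2}\Scal(g_2)$. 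Thus on the product, at a point $(x,y)$, the scalar curvature is $\Scal(g_1)(x)+\varepsilon^{-2}\Scal(g_2)(y)$ and the maximal eigenvalue of the curvature operator is $\max\{\lambda_{\max}(g_1)(x),\ \varepsilon^{-2}\lambda_{\max}(g_2)(y),\ 0\}$.

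Here is the subtlety, and it is the step I expect to be the main obstacle: if $\Scal(g_2)$ is not positive, the term $\varepsilon^{-2}\Scal(g_2)$ may be large and negative, destroying positivity of the total scalar curvature; and even when $\Scal(g_2)>0$, the quotient $\Scal/(2\lambda_{\max})$ on the $M_2$-directions contributes $\Riem(g_2)$, which could be smaller than $\Riem(g_1)$. The remedy is to choose $g_2$ more carefully: take $g_2$ itself to be a metric on $M_2$ realizing (up to $\delta$) the supremum $\bRiem(M_2)$ when comparing against $\bRiem(M_2)$, but when the goal is only to beat $\bRiem(M_1)$, replace the flat product by a sufficiently curved one. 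Concretely, fix any background metric $h_2$ on $M_2$; for large $\varepsilon^{-1}$ the $M_2$-block eigenvalues blow up like $\varepsilon^{-2}$ while the $M_1$-block stays fixed, so $\lambda_{\max}$ of the product equals the $M_2$-value and the infimum of $\Scal/(2\lambda_{\max})$ over the product is governed by $\Riem(h_2)$, not $\Riem(g_1)$ — the wrong direction. Therefore one must instead scale $g_1$ \emph{down}: consider $\varepsilon^2 g_1 \oplus g_2$ and let $\varepsilon\to 0$, so that the $M_1$-block eigenvalues dominate and $\lambda_{\max}$ of the product is $\varepsilon^{-2}\lambda_{\max}(g_1)(x)$ at every point, while the total scalar curvature is $\varepsilon^{-2}\Scal(g_1)(x)+\Scal(g_2)(y)$. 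Then
$$
\frac{\Scal_{\text{prod}}}{2\lambda_{\max,\text{prod}}}
=\frac{\Scal(g_1)(x)+\varepsilon^2\Scal(g_2)(y)}{2\lambda_{\max}(g_1)(x)}
\ \xrightarrow[\varepsilon\to 0]{}\ \frac{\Scal(g_1)(x)}{2\lambda_{\max}(g_1)(x)}
$$
uniformly on the compact manifold $M_1\times M_2$, using that $\lambda_{\max}(g_1)$ is bounded below by a positive constant (since $g_1$ is psc with $\Riem(g_1)>0$, and $\Scal(g_1)>0$ forces $\lambda_{\max}(g_1)>0$). Hence for $\varepsilon$ small the product metric is psc and $\Riem(\varepsilon^2 g_1\oplus g_2)\ge \Riem(g_1)-\delta$. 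Taking the supremum over $g_1$ gives $\bRiem(M_1\times M_2)\ge \bRiem(M_1)$; the symmetric argument gives $\ge\bRiem(M_2)$, and combining proves the proposition. If neither $M_1$ nor $M_2$ admits a psc metric then the right-hand side is $0$ and the inequality is trivial; if one of them does, the construction above produces a psc metric on the product, so the product admits psc metrics and the finite comparison is meaningful. I would close by remarking that one should double-check the edge case $\lambda_{\max}(g_1)(x)\le 0$ somewhere on $M_1$ cannot occur under $\Scal(g_1)>0$ because the trace of the curvature operator is $\Scal(g_1)(x)>0$, forcing a positive eigenvalue.
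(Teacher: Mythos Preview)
Your proof is correct and follows essentially the same approach as the paper: scale the metric on one factor so that its curvature dominates the product curvature operator, then invoke compactness together with the formula $\Riem(g)=\inf_M \Scal/(2\lambda_{\max})$ to conclude. The paper's own proof is a two-line sketch of precisely this idea (``amplify the metric $g_1$ by multiplying it by $t>0$ and use the compactness''), whereas you have carefully written out the block-diagonal structure of the product curvature operator and the uniform-convergence details.
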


The above property is still true for general Riemannian submersions. In particular, we show that the condition $\bRiem>t$ satisfies a fibre bundle property
\begin{thm-a}
Let $\pi:M\to B$ be a fibre bundle with fibre $F$ of dimension $p$ and structure group $G$. Suppose $M$ and $B$ are compact and that the fibre $F$ admits a $G$-invariant metric $\hat{g}$ with $\Riem(\hat{g})>t$ for some $t\in [0, p(p-1)/2)$. Then $\bRiem(M)>t$.
\end{thm-a}
In particular, if compact manifolds $M$ and $N$ are respectively the total spaces of a $\mathbb{CP}^2$  bundle and an $\mathbb{HP}^2$ bundle with structure group the group of isometries of their Fubiny-Study metrics, then   $\bRiem(M) \geq 2$ and $\bRiem(N) \geq 8$.\\
The next result is the  surgery theorem
\begin{thm-b}\label{thmb}
\begin{itemize}
\item[a)] Let $q$ be an integer such that $3\leq q\leq n$ and $M$ be a compact $n$-manifold with $ \bRiem(M)\leq \binom{q-1}{2}$.
 If a compact manifold $\widehat{M}$ is obtained from $M$ by a surgery of 
codimension $\geq q$ then $$\bRiem(\widehat{M})\geq \bRiem(M).$$
\item[b)] Let $k$ be an integer with $2\leq k\leq n-1$ and $M$ be a compact $n$-manifold with $ \bRiem(M)> \binom{k}{2}$.
If a compact manifold $\widehat{M}$ is obtained from $M$ by a surgery of 
codimension $\geq k+1$ then $$\bRiem(\widehat{M})\geq \binom{k}{2}.$$
\end{itemize}
\end{thm-b}
One important consequence of the previous results is the following  gap theorem

\begin{thm-bb}
\begin{enumerate}
\item For a compact simply connected manifold $M$ of dimension $\geq 5$ one has: 
\[\bRiem(M)>0\implies \bRiem(M) \geq 1.\]
\item For a compact $2$-connected manifold $M$ of dimension $\geq 7$ one has: \[\bRiem(M)>0\implies \bRiem(M) \geq 3.\]
\item For a compact $3$-connected and non-string manifold $M$ of dimension $\geq 9$ one has: \[\bRiem(M)>0\implies \bRiem(M) \geq 6.\]
\end{enumerate}
\end{thm-bb}
Another consequence of  the surgery theorem  is the stability under connected sums of the condition $\bRiem(M^n)>t$ for the values of $t$ between $0$ and $(n-1)(n-2)/2$. The same theorem guarantees the stability under surgeries of codimensions $n-1$ or $n$ of $\bRiem(M^n)>t$ for the values of $t$ between $0$ and $(n-2)(n-3)/2$.  As a consequence of this last result, we prove that for $t\in [0, \frac{(n-2)(n-3)}{2})$, the condition $\bRiem(M)>t$ does not impose any restrictions on the fundamental group of $M$.
\begin{cor-b}\label{fund-group} Let $\pi$ be a finitely presented group. Then
 for every $n\geq 4$ and for every $t\in [o, \frac{(n-2)(n-3)}{2})$, there exists a compact $n$-manifold $M$ with
$\bRiem(M)>t$ and $\pi_1(M)=\pi$.\\
In other words, the condition $\bRiem(M^n)\geq \binom{n-2}{2}$ does not impose any restrictions on the fundamental group of a compact $n$-manifold for $n\geq 4$.
\end{cor-b}

It turns out that  the condition $\bRiem(M^n)\geq \binom{n-1}{2}$ (that is the threshold  value  allowed for surgeries!), imposes many restrictions on the topology of the manifold.  The first obstruction result is a vanishing theorem of Betti numbers of the manifold

\begin{thm-c}\label{thmc}
Let $M$ be a compact connected manifold with dimension $n\geq 3$. Then one has
\begin{enumerate}
\item $\bRiem(M^n)>\binom{n-1}{2}\implies b_1(M)=0.$
\item If $\bRiem(M^n)=\binom{n-1}{2}$ and it is attained, then $ b_1(M)\leq n-1.$ 
\end{enumerate}
Here $b_1(M)$ denotes the first Betti number of $M$. 
\end{thm-c}
A direct consequence of the above theorem is the following
\begin{cor-c} Let $n\geq 3$.
\begin{enumerate}
\item[a)]  For any compact manifold $M$ of dimension $n-1$, one has
$$\bRiem(M\times S^1)\leq \binom{n-1}{2}.$$
\item[b)] $\bRiem(S^{n-1}\times S^1)= \binom{n-1}{2}.$ In particular, the standard product metric is maximal.
\item[c)] $\bRiem(\#_r (S^{n-1}\times S^1)= \binom{n-1}{2}.$ There are no maximal metrics on these manifolds if $r\geq n$.

\end{enumerate}
\end{cor-c}

In this paper we prove generalizations of the above theorem \rm{C} to all the higher Betti numbers. In particular, we prove that
\begin{thm-cc}
Let $M$ be a compact connected manifold with dimension $n\geq 4$ and $b_k(M)$ denotes the $k$-th betti number of $M$. Then one has
$$\bRiem(M)>\frac{n(n-1)}{2}-(n-2)\implies b_k(M)=0, \,\,\, {\rm for}\,\, 1\leq k\leq n-1.$$
In particular, $M$ is a homology sphere.
\end{thm-cc}

The next theorem is a weaker version of a theorem due to  B\"ohm and Wilking \cite{BW} and a second theorem due to Ni-Wu \cite{NiWu}. 
It shows in particular that no compact $n$-manifolds, $n\geq 3$, can have their $\bRiem$ within the interval $(N-2, N)$ where $N=n(n-1)/2$.

\begin{thm-d}
Let $M$ be a compact manifold of dimension $n\geq 3$. 
\begin{enumerate}
\item  If $\bRiem(M)>\frac{n(n-1)}{2}-2$ then $\bRiem(M)=\frac{n(n-1)}{2}$.\\
Moreover,  $M$ has a metric $g$ with constant sectional curvature and $\bRiem(M)=\Riem(g)$.
\item
If $\bRiem(M)=\frac{n(n-1)}{2}-2=\Riem(g)$ for some Riemannian metric $g$ on $M$, then either $M$ is locally symmetric or its universal cover is isometric to a product.
\end{enumerate}
\end{thm-d}
The following theorem, which is a consequence of a recent result by Brendle-Hirsch-Johne \cite{BHJ}, shows that taking a cartesian product with a torus does not increase the $\bRiem$. This result combined with the ideal property allowed us to compute the $\bRiem$ invariant for  products of  spheres with tori. Precisely we have

\begin{thm-e}
Let $n\geq 3$ and $1\leq p\leq n-1$ be two integers. Let  $N^{n-p}$ be an arbitrary compact manifold of dimension $n-p$ and $T^p$ be the $p$-dimensional torus. Then for $n\leq 7$ one has
$$\bRiem(N^{n-p}\times T^p)\leq \binom{n-p}{2}.$$
In particular, if $S^{k}$ denotes a positive spherical space form of dimension $k$ then one has
\begin{equation*}
\begin{split}
\bRiem(S^{n-p}\times T^p)&=\bRiem(S^{n-p})= \binom{n-p}{2},\\
\bRiem(S^{2}\times T^{n-2})&=\bRiem(S^{2})= 1.\\
\end{split}
\end{equation*}
\end{thm-e}
As a consequence of recent results by Chen \cite{Chen} and Xu \cite{Xu}, we were able to prove the following rigidity result
\begin{thm-f}
If $n\leq 6$ and $\bRiem(N^{n-p}\times T^p)= \binom{n-p}{2}$ is attained by some Riemannian metric on $N^{n-p}\times T^p$ then 
$N^{n-p}$ has a metric with positive constant curvature.
\end{thm-f}
The following theorem follows from a recent result by Chen \cite{Chen}.
\begin{thm-g}
Let $M^n$ be a compact manifold of dimension $n\leq 7$.
\begin{itemize}
\item[a)] 
If $\bRiem(M^n)\geq 1$ then 
$$\bRiem\Bigl((S^2\times T^{n-2})\# M^n\Bigr)=1.$$
\item[b)]
If $\bRiem(M^n)\geq \binom{n-1}{2}$ then 
$$\bRiem\Bigl((S^{n-1}\times S^1)\# M^n\Bigr)=\binom{n-1}{2}.$$
\end{itemize}
\end{thm-g}

\subsection{Plan of the paper}
Theorem {\bf A} is proved in section 3 and Theorems {\bf B} and ${\mathbf B}^{\prime}$ in section 4. Theorems {\bf C} and {\bf $C^{\prime}$} are proved in section 5 and  the proofs of Theorems {\bf D, E, F} and {\bf G} are in section 2.\\
In the last section 6, we first show as a consequence of Theorem {\bf E} that the product $S^2\times T^p$, for $p\leq 5$, does not admit any Riemannian metric with both $\sigma_1(R)>0$ and $\sigma_2(R)>0$. Where $\sigma_k(R)$ denotes the $k$-th elementary symmetric function in the eigenvalues of the Riemann curvature operator.  Next, we introduce an analogous but weaker smooth invariant, namely $\briem$ invariant. We prove that it remains unchanged under surgeries in most of codimensions. Then we briefly define the $\bRiem$ of a conformal class of metrics, and we determine the $\bRiem$ of the conformal class of the product metric on the product of  two space forms with opposite signs. In this paper, we included as well several open related questions.

\section{Relations with other curvature conditions: Proof of Theorems { \bf D, E, F} and {\bf G}}
Let $M$ be a compact manifold and $g$ a Riemannian metric on $M$.\\
If $\dim M=2$, then $\Riemt(g)=(1-t)\Scal \frac{g^2}{2}$ is determined by the scalar curvature and $\bRiem(M)$ is either $0$ or $1$.
 In $3$ dimensions, it is determined by the Ricci curvature. It is easy to see that in three dimensions the eigenvalues of  $\Riemt(g)$ are equal to  $(1-t)\Scal +2t\rho_i$, for $i=1,2,3$, and  where $\rho_i$ is an eigenvalue of Ricci curvature operator. In particular, 
$\bRiem(M)>1\implies \Ric >0.$ Consequently, by the classification of PSC compact $3$-dimensional manifolds,  $\bRiem(M)$ can take only the values $0,1$ and $3$. \\
In higher dimensions, still we don't know whether $\bRiem$ takes only integer values. As a first step in this direction, we will show in this paper that there are  gaps in the range of $\bRiem$ in all dimensions.\\

The next proposition shows a relation between our curvature condition and positive Ricci curvature condition and $k$-positive Riemann tensor.

\begin{proposition}\label{k-positive}
Let $g$ be a Riemannian metric on a compact $n$-manifold and let $N=n(n-1)/2$. 
\begin{enumerate}
\item For any integer $t\in (0, N)$, the tensor $\Riemt(g)$ is $t$-positive (resp. $t$-nonnegative) if and only if the Riemann curvature operator $R$ of $g$ is $(N-t)$-positive (resp. $(N-t)$-nonnegative). In particular,
\begin{itemize}
\item The Riemann curvature operator $R$ is positive (resp. nonnegative) if and only if  $\Riem_{(N-1)}{(g)}$ is $(N-1)$-positive (resp. $(N-1)$-nonnegative).
\item The Riemann curvature operator $R$ is $k$-positive (resp. $k$-nonnegative) if and only if $ \Riem_{(N-k)}(g)$ is $(N-k)$-positive (resp. $(N-k)$-nonnegative).
\end{itemize}
\item $\Riem(g)>\frac{(n-1)(n-2)}{2}\implies \Ric >0$.
\item $\Riem(g)= \frac{(n-1)(n-2)}{2}\implies \Ric \geq0$.
\end{enumerate}

\end{proposition}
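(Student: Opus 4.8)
\emph{Proof proposal.} The plan is to reduce everything to linear algebra on $\Lambda^2T_xM$ at a point. The key fact is that, under the Kulkarni--Nomizu product, $\tfrac{g^2}{2}$ has curvature operator equal to $\mathrm{Id}_{\Lambda^2}$, so that $\Riemt(g)$, viewed as a symmetric endomorphism of $\Lambda^2T_xM$, equals $\Scal\cdot\mathrm{Id}_{\Lambda^2}-2tR$; this is exactly what underlies the positivity criterion \eqref{lambdam}. Writing $\lambda_1\le\cdots\le\lambda_N$ for the eigenvalues of $R$, where $N=\binom n2=\dim\Lambda^2$, the eigenvalues of $\Riemt(g)$ are therefore $\Scal-2t\lambda_i$, and I record the normalization $\Scal=2\tr R=2\sum_{i=1}^N\lambda_i$ (the same one behind the formula $\Riem(g)=\inf_M\tfrac{\Scal}{2\lambdam}$). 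Finally, recall that a symmetric operator $A$ is $k$-positive (resp.\ $k$-nonnegative) exactly when the sum of its $k$ smallest eigenvalues is $>0$ (resp.\ $\ge0$), equivalently $\tr(P_VAP_V)>0$ (resp.\ $\ge0$) for every $k$-dimensional subspace $V$, with $P_V$ the orthogonal projection onto $V$.

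For part (1), since $t>0$ the assignment $\lambda_i\mapsto\Scal-2t\lambda_i$ reverses order, so the $t$ smallest eigenvalues of $\Scal\cdot\mathrm{Id}-2tR$ are $\Scal-2t\lambda_N,\dots,\Scal-2t\lambda_{N-t+1}$, with sum
\[
t\Scal-2t\sum_{i=N-t+1}^{N}\lambda_i \;=\; 2t\Bigl(\sum_{i=1}^{N}\lambda_i-\sum_{i=N-t+1}^{N}\lambda_i\Bigr)\;=\;2t\sum_{i=1}^{N-t}\lambda_i .
\]
Hence $\Riemt(g)$ is $t$-positive (resp.\ $t$-nonnegative) if and only if $\lambda_1+\cdots+\lambda_{N-t}>0$ (resp.\ $\ge0$), i.e.\ if and only if $R$ is $(N-t)$-positive (resp.\ $(N-t)$-nonnegative). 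The two bulleted statements are the cases $t=N-1$ (where $N-t=1$, and $1$-positivity of $R$ is plain positive-definiteness) and $t=N-k$.

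For parts (2) and (3), note that $\binom{n-1}{2}=N-(n-1)$ is a positive integer in $(0,N)$ for $n\ge3$. If $\Riem(g)>\binom{n-1}{2}$ then, by $\Riem(g)=\inf_M\tfrac{\Scal}{2\lambdam}$, we have $\Scal-2\binom{n-1}{2}\lambdam>0$ at every point, so the operator $\Riem_{\binom{n-1}{2}}(g)=\Scal\cdot\mathrm{Id}_{\Lambda^2}-2\binom{n-1}{2}R$ is positive definite, hence in particular $(N-(n-1))$-positive; by part (1), $R$ is $(n-1)$-positive, i.e.\ $\lambda_1+\cdots+\lambda_{n-1}>0$ pointwise. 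Given a unit vector $v\in T_xM$, extend it to an orthonormal basis $v=e_1,e_2,\dots,e_n$; then $v\wedge e_2,\dots,v\wedge e_n$ is an orthonormal basis of the $(n-1)$-plane $V:=v\wedge T_xM\subset\Lambda^2$, and $\tr(P_VRP_V)=\sum_{j=2}^n\langle R(v\wedge e_j),v\wedge e_j\rangle=\Ric(v,v)$. Since $\tr(P_VRP_V)\ge\lambda_1+\cdots+\lambda_{n-1}>0$, we conclude $\Ric>0$. Part (3) is identical with non-strict inequalities: $\Riem(g)=\binom{n-1}{2}$ forces $\Scal-2\binom{n-1}{2}\lambdam\ge0$ pointwise, hence $\Riem_{\binom{n-1}{2}}(g)\ge0$, hence $R$ is $(n-1)$-nonnegative by (1), and the same subspace-trace inequality yields $\Ric\ge0$.

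There is no real analytic difficulty here; the only things to be careful about are the normalization conventions (the factor $2$ between $\tr R$ and $\Scal$, and the order reversal caused by the minus sign in $\Scal\cdot\mathrm{Id}-2tR$) and, in (2)--(3), the use of the description $\{t:\Riemt(g)>0\}=\bigl(0,\inf_M\tfrac{\Scal}{2\lambdam}\bigr)$ to pass from the (strict or non-strict) bound on $\Riem(g)$ to the positive-definiteness, resp.\ positive-semidefiniteness, of $\Riem_{\binom{n-1}{2}}(g)$ even when the supremum defining $\Riem(g)$ is not attained. I expect that last bit of bookkeeping to be the only mild obstacle.
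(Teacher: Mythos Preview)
Your proof is correct. Part~(1) is essentially identical to the paper's argument: both compute the sum of $t$ eigenvalues of $\Riemt(g)$ and rewrite it as $2t$ times the complementary sum of $N-t$ eigenvalues of $R$.

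For parts~(2)--(3) you take a slightly different route. You pass through part~(1): from $\Riem_{\binom{n-1}{2}}(g)>0$ you deduce that $R$ is $(n-1)$-positive, and then use the trace inequality $\Ric(v,v)=\tr(P_VRP_V)\ge\lambda_1+\cdots+\lambda_{n-1}$ on the $(n-1)$-plane $V=v\wedge T_xM$. The paper instead stays with $\Riemt$ and computes a direct partial trace: for $t=\binom{n-1}{2}$ one has $\sum_{i,j\neq 1}\Riemt(e_i,e_j,e_i,e_j)=4t\,\Ric(e_1,e_1)$, which is positive since each summand is. Your argument is a touch longer but has the virtue of explicitly exhibiting parts~(2)--(3) as corollaries of part~(1); the paper's computation is more self-contained and avoids the min-max inequality. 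Either way the content is the same linear algebra.
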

\begin{proof}
To prove the first part, we take the sum of arbitrary $t$ eigenvalues of $\Riemt(g)$ and get
$$t\Scal-2t\sum_{i\in I}\lambda_i=2t\sum_{i=1}^N\lambda_i-2t\sum_{i\in I}\lambda_i=2t\sum_{i\in I^c}\lambda_i,$$
where $N=n(n-1)/2$, $I$ is a set of indices in $\{ 1,2, ..., N\}$ of length $t$ and $\lambda_i$ are the eigenvalues of the Riemann curvature operator. For the second part, let $e_1$ be a given unit tangent vector and complete by $\{e_2,...,e_n\}$ to get an orthonormal basis of the corresponding tangent space, then for $t=\frac{(n-1)(n-2)}{2}$ one has
\[\sum_{i,j\not= 1}\Riem_t(e_i,e_j,e_i,e_j)=(n-1)(n-2)\Scal-2t\bigg(\Scal-2\Ric(e_1,e_1)\bigg)=4t\Ric(e_1,e_1).\]
Finally, if  $\Riem(g)= \frac{(n-1)(n-2)}{2}$ then it is easy to show that $\Riemt(g)\geq 0$ for $t=\frac{(n-1)(n-2)}{2}$ and the last result follows.
\end{proof}
\subsection{Proof of  Theorem {\bf D} }

\begin{proof}
Let $N=\frac{n(n-1)}{2}$. By  assumption there exists a Riemannian metric $g$  on $M$ such that $\Riem(g)>N-2$. Then $\Riem_{N-2}(g)>0$, in particular it is $(N-2)-positive$. Proposition \ref{k-positive} shows then  that the the Riemann curvature operator is $2$-positive. The theorem follows from B\"ohm-Wilking Theorem 1 in \cite{BW}. \\
For the second part, let $\bRiem(M)=\frac{n(n-1)}{2}-2=\Riem(g)$, for some Riemannian metric $g$ on $M$. Consequently one has
$\Riem_{N-2}(g)\geq 0$. Proposition \ref{k-positive} shows  that the Riemann curvature operator is 2-nonnegative. Then by a theorem of Ni-Wu \cite{NiWu}, $M$ is locally symmetric or its universal cover is isometric to a product.
\end{proof}
\subsection{Positive $\mathcal{C}_p$ intermediate curvature condition: Proof of Theorems {\bf E,F} and {\bf G}}
Brendle, Hirch and Johne defined in \cite{BHJ} the notion of  $p$-intermediate curvature denoted $\mathcal{C}_p$. It is defined on the Grassmannian of tangent $p$-planes of a Riemannian $n$-manifold as follows for $1\leq p\leq n-1$. For a tangent $p$-plane $P$, let $\{e_1,...,e_n\}$ be any orthonormal basis of the tangent space such that the vectors $\{e_1,...,e_p\}$ form an orthonormal basis of $P$ and set
$$\mathcal{C}_p(P):=\sum_{i=1}^p\sum_{j=i+1}^n\Sec(e_i,e_j),$$
where $\Sec(e_i,e_j)$ denotes the sectional curvature of the plane spanned by $\{e_i,e_j\}$. Note that $\mathcal{C}_1$ is the Ricci curvature and $\mathcal{C}_{n-1}$ is one half of the scalar curvature.\\
A simple manipulation for $p\leq n-2$ shows that
\begin{equation}
\begin{split}
\mathcal{C}_p(P)=&\sum_{i=1}^n\sum_{j=i+1}^n\Sec(e_i,e_j)-\sum_{i=p+1}^n\sum_{j=i+1}^n\Sec(e_i,e_j)\\
=& \sum_{i,j=1\atop i<j}^n \Sec(e_i,e_j)-\sum_{i,j=p+1\atop i<j}^n \Sec(e_i,e_j)\\
=& \frac{1}{2}\Scal-\frac{1}{2}s_p(P),
\end{split}
\end{equation}
where $s_p(P)$ is the $p$-curvature of the plane $P$ \cite{agag}.\\
The next theorem shows  the existence of a metric with positive $\mathcal{C}_p$ curvature on a given $n$-manifold $M$ provided that  $\bRiem(M)> \frac{(n-p)(n-p-1)}{2}$.

\begin{theorem}\label{riemcp}
\begin{enumerate}
\item For integers $n\geq 3$ and $1\leq p\leq n-2$, one has
$$\bRiem(M) > \binom{n-p}{2}\implies \mathcal{C}_p>0.$$
\item If $\bRiem(M) = \binom{n-p}{2}$ and it is attained then $\mathcal{C}_p\geq 0$.
\end{enumerate}
\end{theorem}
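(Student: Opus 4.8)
The plan is to show that the \emph{same} metric already does the job: if $g$ is a Riemannian metric on the compact $n$-manifold $M$ with $\Riem(g)>\binom{n-p}{2}$, then $g$ itself satisfies $\mathcal{C}_p>0$. Thus the statement reduces to a pointwise algebraic comparison between the function $P\mapsto\mathcal{C}_p(P)$ on the Grassmann bundle of tangent $p$-planes and the function $\Scal/(2\lambdam)$ whose infimum over $M$ equals $\Riem(g)$.

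First I would fix a point and a tangent $p$-plane $P$, choose an adapted orthonormal frame $\{e_1,\dots,e_n\}$ (so $e_1,\dots,e_p$ span $P$), and invoke the identity established above, namely that for $p\le n-2$
\begin{equation*}
\mathcal{C}_p(P)=\frac12\Scal-\!\!\sum_{p+1\le i<j\le n}\!\!\Sec(e_i,e_j),
\end{equation*}
where the remaining sum has exactly $\binom{n-p}{2}$ terms. Since $\Sec(e_i,e_j)=\langle R(e_i\wedge e_j),e_i\wedge e_j\rangle$ and $e_i\wedge e_j$ is a unit bivector, the Rayleigh-quotient bound gives $\Sec(e_i,e_j)\le\lambdam$ for each pair, hence
\begin{equation*}
\mathcal{C}_p(P)\ \ge\ \frac12\Scal-\binom{n-p}{2}\lambdam\ =\ \lambdam\left(\frac{\Scal}{2\lambdam}-\binom{n-p}{2}\right)
\end{equation*}
at every point (here I use that $\Scal>0$ forces $\lambdam>0$, because $\tr R=\frac12\Scal$, so the last two displays are genuinely equivalent). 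For part (1): if $\Riem(g)=\inf_M\Scal/(2\lambdam)>\binom{n-p}{2}$, then at every point the bracket is strictly positive and $\lambdam>0$, so $\mathcal{C}_p(P)>0$; as $P$ was arbitrary, $\mathcal{C}_p>0$ on $M$. For part (2): if $\bRiem(M)=\binom{n-p}{2}$ is attained by $g$, then $\Riem(g)=\binom{n-p}{2}$, so $\Scal/(2\lambdam)\ge\binom{n-p}{2}$ pointwise and the same inequality yields $\mathcal{C}_p\ge0$.

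Equivalently --- and closer in spirit to Proposition~\ref{k-positive} --- one can route the argument through the tensor $\Riemt$: for $t=\binom{n-p}{2}$ and an adapted frame one has $\Riemt(e_i,e_j,e_i,e_j)=\Scal-2t\,\Sec(e_i,e_j)$ (since $\frac{g^2}{2}(e_i,e_j,e_i,e_j)=1$ for $i\ne j$), and summing over $p+1\le i<j\le n$ and using the displayed identity for $\mathcal{C}_p(P)$ gives the clean formula
\begin{equation*}
2\binom{n-p}{2}\,\mathcal{C}_p(P)=\!\!\sum_{p+1\le i<j\le n}\!\!\Riem_{\binom{n-p}{2}}(e_i,e_j,e_i,e_j);
\end{equation*}
positivity of $\Riem_{\binom{n-p}{2}}(g)$ as an operator on $\Lambda^2$, which holds exactly when $\Riem(g)>\binom{n-p}{2}$, then forces the right-hand side to be positive (and nonnegativity, which holds when $\Riem(g)=\binom{n-p}{2}$, forces it nonnegative). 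I do not anticipate a real obstacle here: the proof is an elementary eigenvalue inequality. The only points that need a line of care are the sign of $\lambdam$ under the hypothesis $\Scal>0$, and the bookkeeping that for $p\le n-2$ the complementary index block $\{p+1,\dots,n\}$ has at least two elements, so that the displayed expression for $\mathcal{C}_p(P)$ is the relevant one; for the excluded value $p=n-1$ one simply has $\mathcal{C}_{n-1}=\frac12\Scal$ and there is nothing to prove.
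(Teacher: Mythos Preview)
Your proof is correct and your second route---summing $\Riem_{\binom{n-p}{2}}(e_i,e_j,e_i,e_j)$ over the complementary block $p+1\le i<j\le n$ and identifying the result with a positive multiple of $\mathcal{C}_p(P)$---is exactly the argument the paper gives (you even keep the factor of $2$ that the paper silently drops). Your first route via $\Sec(e_i,e_j)\le\lambdam$ is the same computation phrased through the eigenvalue characterization $\Riem(g)=\inf_M\Scal/(2\lambdam)$ rather than through positivity of the tensor $\Riem_t$, so there is no substantive difference in approach.
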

We recall here the equivalence $\bRiem(M) > 0\iff \mathcal{C}_{n-1}>0.$
\begin{proof} We first prove part 1). By assumption, there exists a metric $g$ on $M^n$  with $\Riem(g)>\binom{n-p}{2}$, therefore $\Riem_t(g)>0$ for $t=\binom{n-p}{2}$. Let $\{e_1,e_2..., e_n\}$  be an orthonormal basis of the tangent space at some point and denote by $P$ the plane spanned by the vectors $\{e_1,...,e_p\}$. Then
\begin{equation*}
\begin{split}
\sum_{i,j= p+1}^n&\Riem_t(e_i,e_j,e_i,e_j)=(n-p)(n-p-1)\Scal -2t\sum_{i,j=p+1}^nR(e_i,e_j,e_i,e_j)\\
=&(n-p)(n-p-1)\bigl( \Scal-s_p(P) \bigr) = (n-p)(n-p-1)\mathcal{C}_p(P)>0.\\
\end{split}
\end{equation*}
For the second part, there exists a metric $g$ on $M^n$  with $\Riem(g)= \binom{n-p}{2}$, therefore $\Riem_t(g)\geq0$ for $t=\binom{n-p}{2}$. Using the same argument as above one gets $\mathcal{C}_p\geq 0$. 
\end{proof}
\subsubsection{Proof of Theorem {\bf E}}
\begin{proof}
For $n\leq 7$, the product $N^{n-p}\times T^p$ has no Riemannian metrics with positive $\mathcal{C}_p$  curvature by a recent result of 
 Brendle-Hirch-Johne   in \cite{BHJ}.  The above theorem \ref{riemcp} in this paper implies that $\bRiem(N^{n-p}\times T^p)\leq \frac{(n-p)(n-p-1)}{2}$. To prove the second part recall that the $\Riem$ of standard  metric on $S^{n-p}$ equals $\frac{(n-p)(n-p-1)}{2}$. The ideal property of proposition  \ref{idealp} shows that $\bRiem(S^{n-p}\times T^p)\geq \frac{(n-p)(n-p-1)}{2}$. This completes the proof.
\end{proof}
\begin{remark}
The natural question whether the previous Theorems {\bf E} remains true for all $n\geq 8$,  is an  open question. This is clearly true for $p=1$ and $p=n-1$ by Theorem {\bf C}. 
\end{remark}

\subsubsection{Proof of Theorem {\bf F}}
\begin{proof}
By assumption, there exists a metric $g$ on the product $N^{n-p}\times T^p$  with $\Riem(g)=\binom{n-p}{2}.$ 
 The second part in the above theorem \ref{riemcp}  implies that $\mathcal{C}_p\geq 0$. A recent result by Chu-Kwang-Lee \cite{CKL}for $n\leq 5$, improved  by Xu \cite{Xu} to $n=6$, shows that the metric $g$ must be isometric to a product  of a metric $g_1$ on $N$ and the flat metric on the torus. Therefore one has
$$\Riem(g)=\Riem(g_1)=\binom{n-p}{2}.$$
Consequently $N$ is a positive space form by the above Theorem {\bf D}.
This completes the proof.
\end{proof}

\subsubsection{Proof of Theorem {\bf G}}
\begin{proof}
The surgery theorem shows that the $\bRiem$ of the manifold in part a) is $\geq 1$ while it is $\geq \binom{n-1}{2}$ for the manifold in part b).
The $\bRiem$ can't take higher values in both cases as this will imply the positivity of $\mathcal{C}_{n-2}$ in part a) and the positivity of $\mathcal{C}_1$ in part b). These are not possible by a recent result of Chen \cite{Chen} for $n\leq 7$.
\end{proof}

\subsubsection{Gromov's macroscopic dimension conjecture and the $\bRiem$ constant}
The previous theorem {\bf E} shows clearly an interaction between Gromov's macroscopic dimension of the universal cover of the manifold and its $\bRiem$ invariant. This suggests the following refined conjecture \\
\begin{conjecture} \emph{Let ${\rm dim}_{mc}(M)$ denotes the macroscopic dimension of the universal cover of a compact manifold $M$ and $d$ be an integer such that $1\leq d\leq n-1$. Then}
\[\bRiem(M)> \frac{(n-d)(n-d-1)}{2} \implies {\rm dim}_{mc}(M)< d.\]
\end{conjecture}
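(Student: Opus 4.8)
The plan is to reduce the conjecture, through Theorem~\ref{riemcp}, to a macroscopic dimension estimate for the intermediate curvature $\mathcal{C}_d$, and then to attack the latter with the warped $\mu$-bubble / minimal-slicing machinery behind the recent progress on positive scalar curvature and on the $\mathcal{C}_p$ curvatures. It should be said at the outset that the boundary case $d=n-1$, where $\mathcal{C}_{n-1}=\frac12\Scal$, is precisely Gromov's macroscopic dimension conjecture for scalar curvature; hence a complete proof contains a famous open problem, and what follows is a program rather than a short argument.

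\emph{Step 1: reduction to $\mathcal{C}_d>0$.} By Theorem~\ref{riemcp}, together with the recalled equivalence $\bRiem(M)>0\iff\mathcal{C}_{n-1}>0$ for the case $d=n-1$, the hypothesis $\bRiem(M)>\binom{n-d}{2}$ produces a metric $g$ on $M$ with $\mathcal{C}_d(g)>0$. Since $M$ is compact, the pulled-back metric on the universal cover $\widetilde M$ satisfies $\mathcal{C}_d\geq c>0$ and has bounded geometry, and ${\rm dim}_{mc}(M)$ is by definition the macroscopic dimension of $(\widetilde M,\widetilde g)$. It therefore suffices to prove that a complete manifold of bounded geometry with $\mathcal{C}_d\geq c>0$ has macroscopic dimension at most $d-1$.

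\emph{Step 2: a $\mathcal{C}_d$ band-width inequality.} The geometric heart is to show that every isometrically embedded band $Y\times[0,L]$ in $(\widetilde M,\widetilde g)$, with $Y$ carrying a suitable top homology class, has $L$ bounded in terms of $c$ and $d$ only. One minimizes a weighted area-minus-bulk functional to produce a separating hypersurface $\Sigma$; its stability inequality, combined with $\mathcal{C}_d>0$, should yield on $\Sigma$ a conformally modified metric with positivity of an intermediate curvature of one lower order, so that iterating $d-1$ times lands on a surface of positive weighted Euler characteristic which, by Gauss--Bonnet, cannot be wide. This is the scheme by which Brendle--Hirsch--Johne exclude $\mathcal{C}_p>0$ on $N^{n-p}\times T^p$, refined by Chu--Kwong--Lee and by Xu; as there, one expects to need $n\leq 7$ because of the regularity theory for area-minimizing hypersurfaces, and removing this restriction (via the level-set / weak-solution approach) is the first serious obstacle.

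\emph{Step 3: from band-width to macroscopic dimension, and the main obstacle.} A uniform bound on the width of every such $d$-dimensional slab in $\widetilde M$ should, by Gromov's Uryson-width and filling arguments --- for instance by iteratively collapsing along the $\mu$-bubble hypersurfaces --- produce a map from $\widetilde M$ to a locally finite simplicial complex of dimension $\leq d-1$ with uniformly bounded fibres, which is exactly ${\rm dim}_{mc}(M)\leq d-1$. The main obstacle is that this last implication is open already for scalar curvature in high dimensions: even granting the band-width inequality, reaching the sharp bound ${\rm dim}_{mc}\leq n-2$ is the content of Gromov's conjecture, so the full statement would in particular settle it. The methods above should nevertheless yield the conjecture when $n-d$ is small, where both the $\mathcal{C}_d$ band-width inequality and the relevant low-dimensional cases of Gromov's conjecture are available.
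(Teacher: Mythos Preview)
The statement you are attempting to prove is labelled a \emph{Conjecture} in the paper, and the paper does not claim to prove it. Immediately after stating it, the author records only partial evidence: the case $d=1$ is trivially true because $\bRiem(M)>\binom{n-1}{2}$ forces $\Ric>0$, hence a compact universal cover and ${\rm dim}_{mc}(M)=0$; the case $d=2$ with $n\leq 5$ follows because $\bRiem(M)>\binom{n-2}{2}$ forces $\mathcal{C}_2>0$, and Xu \cite{Xu} showed this implies ${\rm dim}_{mc}(M)<2$; and the case $d=n-1$ is identified with Gromov's original macroscopic dimension conjecture. No further argument is offered.

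Your proposal is therefore not in tension with the paper, but neither is there a ``paper's proof'' to compare it to. You yourself correctly flag that what you wrote is a program rather than a proof: Step~1 (reduction to $\mathcal{C}_d>0$ via Theorem~\ref{riemcp}) is rigorous and is exactly how the paper handles the known special cases; Step~2 (a $\mathcal{C}_d$ band-width inequality via iterated $\mu$-bubbles) is plausible but not established in the literature in the generality you need, and you rightly note the expected dimensional restriction $n\leq 7$; Step~3 (passing from band-width bounds to macroscopic dimension) is, as you say, open already for scalar curvature. So the genuine gap is simply that the conjecture is open: your outline contains Gromov's macroscopic dimension conjecture as the special case $d=n-1$, and no known technique closes it. The honest summary is that your Step~1 recovers precisely the evidence the paper presents, and your Steps~2--3 are a reasonable but unproved strategy for the rest.
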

Gromov's original macroscopic dimension conjecture is recovered for $d=n-1$. For $d=1$, the above conjecture is trivially true as $\bRiem(M)> \frac{(n-1)(n-2)}{2} \implies \Ric >0$ and therefore ${\rm dim}_{mc}(M)=0$ as its universal cover is in this case compact. The conjecture is as well true for $d=2$ and $n\leq 5$. In fact, the condition $\bRiem(M)> \frac{(n-2)(n-3)}{2} \implies \mathcal{C}_2 >0$. Where the condition $\mathcal{C}_2 >0$ is as in the above section. The later condition is shown by Xu \cite{Xu} to imply that ${\rm dim}_{mc}(M)<2$.

\section{The $\bRiem$ of total spaces of Riemannian submersions: Proof of Theorem {\bf A} and Proposition \ref{idealp}}

\subsection{Proof of Proposition \ref{idealp}}
\begin{proof}
We will show that the $\bRiem$ of the Cartesian product of two manifolds  $M_1$ and $M_2$ cannot be less than the $\Riem$ of $M_1$ or $M_2$.\\ 
If $\bRiem(M_1)=\bRiem(M_2)=0$, the result is trivial. If  $\bRiem(M_1)>0$, then $M_1$ possesses a metric $g_1$ with $\Riem(g_1)>0$. Then one can amplify the metric $g_1$ by multiplying it by $t>0$ and use the compactness to show that  $\bRiem(M_1\times M_2) \geq  \bRiem(M_1).$ The proof can be then completed easily after considering the cases $\bRiem(M_2)=0$ and $\bRiem(M_2)>0$.
\end{proof}

\subsection{Proof of Theorem {\bf A}}

Theorem {\bf A} is a special case of the following more general theorem
\begin{theorem}
Let $M$ be compact and be the total space  of a Riemannian submersion $\pi:(M,g)\to (B,\check{g})$ with totally geodesic fibers $F_x, x\in B$. Let $p=\dim F_x$ and $k\in [0, p(p-1)/2)$.  Suppose that the induced metric on the fibers satisfy $\Riem(g_{|_{F_x}})>k$ for all $x\in B$ then $\bRiem(M)>k$.
\end{theorem}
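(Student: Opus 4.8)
The plan is to produce, for every sufficiently small $\varepsilon>0$, a Riemannian metric $g_\varepsilon$ on $M$ with $\Riem(g_\varepsilon)>k$; since $\Riem(g)=\inf_M\frac{\Scal(g)}{2\lambda_{\max}(g)}$ and $M$ is compact this infimum is a minimum, so such a metric gives $\bRiem(M)>k$ immediately. The metric $g_\varepsilon$ will be the \emph{canonical variation} of the submersion: keep the vertical distribution $\mathcal V$ (tangent to the fibres), the horizontal distribution $\mathcal H$ and the horizontal metric unchanged, and shrink along the fibres, $g_\varepsilon|_{\mathcal V}=\varepsilon\,g|_{\mathcal V}$, $g_\varepsilon|_{\mathcal H}=g|_{\mathcal H}$, $\mathcal V\perp_{g_\varepsilon}\mathcal H$, with $g_1=g$. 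I will use two classical facts about this family (O'Neill's equations; see Besse, \emph{Einstein Manifolds}, \S9.G): since the fibres are totally geodesic in $(M,g)$ their second fundamental form vanishes and stays zero for all $\varepsilon$, so the fibres stay totally geodesic in $(M,g_\varepsilon)$, with induced metric $\varepsilon\,g_{|F_x}$; and the O'Neill integrability tensor $A$ of $\mathcal H$ is, in the appropriate normalization, $\varepsilon$-independent.

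The core of the argument is the behaviour of the curvature operator $\mathcal R_{g_\varepsilon}$ on $\Lambda^{2}T_{y}M$ as $\varepsilon\to 0$, for $y\in M$. Using the $g_\varepsilon$-orthogonal splitting $\Lambda^{2}T_{y}M=\Lambda^{2}\mathcal V\oplus(\mathcal V\wedge\mathcal H)\oplus\Lambda^{2}\mathcal H$ and O'Neill's formulas, one checks that
\[
\mathcal R_{g_\varepsilon}=\tfrac1\varepsilon\,\mathcal P+\mathcal Q_\varepsilon ,
\]
where $\mathcal P$ is the self-adjoint operator acting as the curvature operator $\mathcal R^{F}$ of the fibre $F=F_{\pi(y)}$ on $\Lambda^{2}\mathcal V$ and as $0$ on $(\mathcal V\wedge\mathcal H)\oplus\Lambda^{2}\mathcal H$, while $\mathcal Q_\varepsilon$ is self-adjoint and uniformly bounded, $\|\mathcal Q_\varepsilon\|\le C$ for all $\varepsilon\in(0,1]$ and $y\in M$ (in fact $\mathcal Q_\varepsilon$ has a limit as $\varepsilon\to 0$: the $\Lambda^{2}\mathcal H$-block tends to the pulled-back curvature operator of $(B,\check g)$, while the mixed block and all cross terms, being expressible through $A$, $\nabla A$ and a positive power of $\varepsilon$, tend to $0$). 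The factor $\tfrac1\varepsilon$ in the vertical block is exactly the Gauss equation for the totally geodesic fibre $(F,\varepsilon g_{|F})$ combined with the rescaling of the induced inner product on $\Lambda^{2}\mathcal V$ by $\varepsilon^{2}$. Because the hypothesis gives $\Riem(g_{|F_x})>k\ge 0$, the fibres are PSC, so at every $y\in M$ one has $\Scal^{F}>0$ and hence $\lambda^{F}:=\lambda_{\max}(\mathcal R^{F})>0$; by compactness of $M$ and continuity, $\Scal^{F}$ and $\lambda^{F}$ are bounded below by a positive constant, uniformly on $M$. Taking traces in the displayed identity, and applying Weyl's eigenvalue inequality, we obtain, uniformly in $y\in M$,
\[
\Scal(g_\varepsilon)=\tfrac1\varepsilon\,\Scal^{F}+O(1),\qquad \lambda_{\max}(g_\varepsilon)=\tfrac1\varepsilon\,\lambda^{F}+O(1),
\]
whence
\[
\frac{\Scal(g_\varepsilon)}{2\lambda_{\max}(g_\varepsilon)}=\frac{\Scal^{F}+O(\varepsilon)}{2\lambda^{F}+O(\varepsilon)}\ \xrightarrow{\ \varepsilon\to0\ }\ \frac{\Scal^{F}}{2\lambda^{F}}\qquad\text{uniformly on }M,
\]
the error terms being uniform since $\lambda^{F}$ is bounded below and $M$ is compact.

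To finish, observe that the pointwise limit $\frac{\Scal^{F}(y)}{2\lambda^{F}(y)}$ is the fibrewise ratio $\frac{\Scal}{2\lambda_{\max}}$ of $(F_{\pi(y)},g_{|F_{\pi(y)}})$ at the point $y$, hence is $\ge \Riem(g_{|F_{\pi(y)}})>k$; as this holds at every $y\in M$ and $M$ is compact, $\min_{M}\frac{\Scal^{F}}{2\lambda^{F}}=k+2\delta$ for some $\delta>0$. Therefore, for $\varepsilon$ small enough, $\frac{\Scal(g_\varepsilon)}{2\lambda_{\max}(g_\varepsilon)}\ge k+\delta$ everywhere on $M$, i.e.\ $\Riem(g_\varepsilon)=\inf_{M}\frac{\Scal(g_\varepsilon)}{2\lambda_{\max}(g_\varepsilon)}\ge k+\delta>k$, and consequently $\bRiem(M)\ge \Riem(g_\varepsilon)>k$.

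I expect the genuine work to be in the second paragraph: checking from O'Neill's equations that the only block of $\mathcal R_{g_\varepsilon}$ that blows up as $\varepsilon\to 0$ is the vertical one, that it blows up exactly like $\tfrac1\varepsilon\mathcal R^{F}$, and that every component coupling $\Lambda^{2}\mathcal V$ to its complement carries a positive power of $\varepsilon$ — this is precisely where total geodesicity of the fibres ($T\equiv 0$, a property preserved by the variation) and the $\varepsilon$-independence of $A$ enter. Granting this, Theorem~\textbf{A} is the special case in which $\pi$ is a fibre bundle with compact structure group $G$ and $\hat g$ a $G$-invariant metric on $F$ with $\Riem(\hat g)>t$: a choice of principal connection and of a metric on $B$ produces (Vilms) a Riemannian submersion metric on $M$ with totally geodesic fibres each isometric to $(F,\hat g)$, so the theorem applies with $k=t$.
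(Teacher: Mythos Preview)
Your argument is correct and is essentially the paper's own proof: both shrink the fibres via the canonical variation, use O'Neill's formulas to see that in a $g_\varepsilon$-orthonormal frame the curvature operator equals $\varepsilon^{-1}$ times the fibre curvature operator on $\Lambda^2\mathcal V$ plus a uniformly bounded remainder, and conclude that $\Scal(g_\varepsilon)/(2\lambda_{\max}(g_\varepsilon))$ tends uniformly on $M$ to the fibre ratio $\Scal^{F}/(2\lambda^{F})>k$. One small inaccuracy in your parenthetical remark: the $\Lambda^{2}\mathcal V\leftrightarrow\Lambda^{2}\mathcal H$ cross block is only $O(1)$, not $o(1)$, but since your Weyl-inequality step uses nothing beyond the uniform bound $\|\mathcal Q_\varepsilon\|\le C$, this does not affect the proof.
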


\begin{proof}
We use the canonical variation $g_t$ of the metric $g$, that is the metric obtained by  re-scaling $g$ in the vertical directions by $t^2$. We shall prove that there exists $t>0$ such that $\Riem(g_t)>k$, and consequently one has $\bRiem(M)>k$ as desired.\\
Let $U,V,W,W^{\prime}$ be vertical vectors of $g_t$-length 1 and $X,Y,Z,Z^\prime$ be arbitrary forizontal vectors. We shall index by $t$ all the invariants of the metric $g_t$ and put under a hat the invariants of the fibers with the induced metric. We omit the index $t$ in case $t=1$. O'Neill's formulas for Riemannian submersions show that, see for instance chapter 9 in \cite{Besse} or chapter 2 in \cite{Labbi-these}
\[
\begin{split}
R_t&(U\wedge V,W\wedge W^\prime)=t^2\hat{R}(U\wedge V,W\wedge W^\prime),\\
R_t&(U\wedge V,W\wedge X)=0,\\
R_t&(X\wedge U,Y\wedge V)= O(1),\\
R_t&(U\wedge V,X\wedge Y)= O(1),\\
R_t&(X\wedge Y,Z\wedge U)= o(t),\\
R_t&(X\wedge Y,Z\wedge Z^\prime)= O(1).
\end{split}
\]

Here the Riemann tensor is seen as a $(2,2)$ double form, $\hat{R}$ is the double form associated to the Riemann tensor of the fibre metric $\hat{g}=g_{|_{F_x}}$.\\
Let now $\phi$ be any 2-form of $g_t$-unit length in $\wedge^2M$ and denote by $\hat{\phi}$  its pointwise orthogonal projection onto $\wedge^2F$, then the above formulas show that
\[R_t(\phi,\phi)=t^2\hat{R}(\hat{\phi},\hat{\phi})+O(1).\]
Suppose the maximum eigenvalue of the curvature operator $R_t$ is $\lambda^t_{\rm max}=R_t(\phi,\phi)$ for some $g_t$-unit length 2-form $\phi$. Then one has at each point of $M$ the following
\[\lambda^t_{\rm max}=\frac{1}{t^2}\hat{R}(t^2\hat{\phi},t^2\hat{\phi})+O(1)\leq \frac{1}{t^2}\hat{\lambda}_{\rm max}+O(1).\]
Here $\hat{\lambda}_{\rm max}$ denotes the maximum eigenvalue of the curvature operator $\hat{R}$. In the last argument, we used the fact that $||t^2\hat{\phi}||=||\hat{\phi}||_t\leq  ||\phi||_t=1$.\\
To complete the proof just remark that
\[\frac{\Scal(g_t)}{2\lambda^t_{\rm max}}\geq \frac{\Scal(\hat{g})+O(t^2)}{2\hat{\lambda}_{\rm max}+O(t^2)}.\]
Where we used the fact that $\Scal(g_t)=\frac{\Scal(\hat{g})}{t^2}+O(1)$.  Consequently, at each point of $M$,  if $k<\frac{\Scal(\hat{g})}{2\hat{\lambda}_{\rm max}}$ then there exists $t>0$ such $k<\frac{\Scal(g_t)}{2\lambda^t_{\rm max}}$. We conclude using the compactness of $M$.
\end{proof}

\section{The $\bRiem$ invariant and surgeries: Proof of Theorems {\bf B}, ${\mathbf B}^{\prime}$ and Corollary {\bf B}}

\subsection{Proof of Theorem {\bf B}}
The following theorem is a reformulation of Theorem {\bf B}.
\begin{theorem}
Let  an integer $q$ be such that $3\leq q\leq n$ and the real number $t$ satisfies $0\leq t<\binom{q-1}{2}.$\\
The condition $\Riem(g)>t$ on  Riemannian compact $n$-manifolds is preserved under  surgeries of codimension $\geq q$.
 
\end{theorem}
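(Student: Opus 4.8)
The plan is to adapt the Gromov–Lawson / Schoen–Yau surgery technique to the curvature condition $\Riem(g)>t$, exactly as it is carried out for positive scalar curvature, but keeping careful track of the maximal eigenvalue $\lambdam$ of the curvature operator. Recall from the excerpt that $\Riem(g)>t$ is equivalent to the pointwise inequality $t<\frac{\Scal(g)}{2\lambdam(g)}$, i.e. to the positivity of the tensor $\Riemt(g)=\Scal\,\frac{g^2}{2}-2tR$. So the statement to prove is: if $(M,g)$ is a compact $n$-manifold with $\Riemt(g)>0$, and $\widehat M$ is obtained from $M$ by a surgery on an embedded sphere $S^{k}\hookrightarrow M$ with trivial normal bundle of dimension $q=n-k\geq 3$ (so that $2\leq k\leq n-3$ and $t<\binom{q-1}{2}$), then $\widehat M$ admits a metric $\widehat g$ with $\Riem(\widehat g)>t$.

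First I would set up the local model. Near the surgery sphere, $M$ looks like $S^{k}\times D^{q}$; the surgery replaces it by $D^{k+1}\times S^{q-1}$, and both are glued along $S^k\times S^{q-1}$. Following Gromov–Lawson, one builds on $S^k\times D^q$ (with the product metric $ds^2_{S^k}+dr^2+r^2 d\Omega^2_{S^{q-1}}$ for small $r$) a hypersurface in $S^k\times D^q\times\R$ that near the boundary $r=r_0$ agrees with the original product and near the core agrees with a tube $S^k\times S^{q-1}\times(\text{interval})$ of small radius $\delta$, the profile curve being a carefully chosen "bent torus" curve. The key computation is to estimate the curvature operator of the induced metric on this hypersurface. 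The crucial point — and here is where the codimension hypothesis $q\geq 3$ and the bound $t<\binom{q-1}{2}$ enter — is that along the thin tube $S^k\times S^{q-1}_\delta$, the dominant contribution to the curvature is the intrinsic curvature of the round factor $S^{q-1}_\delta$ of radius $\delta$: this contributes $\Scal\sim (q-1)(q-2)/\delta^2$ to the scalar curvature and $\lambdam\sim 1/\delta^2$ to the maximal eigenvalue, so that $\frac{\Scal}{2\lambdam}\to \frac{(q-1)(q-2)}{2}=\binom{q-1}{2}$ as $\delta\to 0$, while all the other curvature terms (from the $S^k$ factor, from the bending, and from the O'Neill/second-fundamental-form corrections) stay bounded. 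Hence for $\delta$ small enough the ratio $\frac{\Scal}{2\lambdam}$ on the tube exceeds $t$, since $t<\binom{q-1}{2}$ strictly.

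The main steps in order: (1) reduce to the case of surgery in the local model $S^k\times D^q$ and state precisely the "generalized torpedo" metric deformation on $D^q$ we want, interpolating between the flat metric for large $r$ and $(\text{tube of radius }\delta)$ near $r=0$; (2) write the induced metric $g_\epsilon$ on the Gromov–Lawson hypersurface and record the behavior of its full curvature operator, decomposing eigenvalues into the "bounded part" and the "$O(1/\delta^2)$ part" coming from the $S^{q-1}_\delta$ directions — this is the technical heart and I would do it by the standard second-fundamental-form/O'Neill bookkeeping on a warped product, or cite the curvature formulas from \cite{GroLa} and track them through the ratio; (3) check that along the bending region the profile curve can be chosen so the new (negative) sectional curvatures introduced in the planes containing the bending direction are as small as we like in absolute value, so that $\Scal$ stays close to its old value and $\lambdam$ does not increase much — this is exactly the Gromov–Lawson "small disk" argument, and the PSC version already gives $\Scal>0$; the extra content here is keeping $\frac{\Scal}{2\lambdam}>t$, which follows because on the transition region $\Scal$ is bounded below by a positive constant and $\lambdam$ bounded above, with the gap controllable; (4) glue: the deformed metric agrees with a product $\hat h\times S^{q-1}_\delta$ near the core, which extends over the handle $D^{k+1}\times S^{q-1}$ by a standard metric (shrinking $S^{q-1}$, then capping $D^{k+1}$ with a torpedo in the $k+1$ directions — here $q\geq 3$ is again used, and also one uses $t<\binom{q-1}{2}\le\binom{n-2}{2}$... actually one needs the cap to keep $\Riem>t$, which it does since the cap region is essentially $S^{q-1}_\delta$ times something of bounded geometry); (5) conclude $\Riem(\widehat g)>t$ by compactness, taking the pointwise infimum of $\frac{\Scal}{2\lambdam}$ over the finitely many regions.

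I expect the main obstacle to be step (2)–(3): getting a clean enough handle on the \emph{full eigenvalue list} of the curvature operator of the bent hypersurface metric, rather than just on $\Scal$, so as to pin down $\lambdam$ up to bounded error. The PSC literature only needs the trace; here we need to know no single eigenvalue blows up faster than the intrinsic $1/\delta^2$ of the small sphere and that $\Scal$ blows up at the full rate $(q-1)(q-2)/\delta^2$. This should follow from the block structure of the curvature operator of a warped/doubly-warped product (the $\wedge^2$ of the tangent space splits into $S^k$-block, $S^{q-1}$-block, mixed block, and normal/bending blocks), but writing it carefully — including the cross terms that O'Neill-type corrections produce in the mixed block — is the delicate part, and it is where the strict inequality $t<\binom{q-1}{2}$ is consumed.
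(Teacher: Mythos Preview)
Your approach is correct in outline and would work, but the paper takes a much shorter route: it invokes Hoelzel's general surgery theorem \cite{Hoelzel}, which says that any curvature condition cut out by an open, convex, $\mathrm{O}(n)$-invariant cone $C\subset C_B(\R^n)$ in the space of algebraic curvature operators is preserved under surgeries of codimension $\geq q$ as soon as the model curvature of $S^{q-1}\times\R^{n-q+1}$ lies in $C$. The paper's proof then reduces to verifying that $C_{\Riemt>0}=\{R:\Scal(R)\frac{g^2}{2}-2tR>0\}$ is such a cone (immediate from linearity in $R$) and that $\Riemt(S^{q-1}\times\R^{n-q+1})>0$ iff $t<\binom{q-1}{2}$ (direct, since the model has $\Scal=(q-1)(q-2)$ and $\lambdam=1$).

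Your direct Gromov--Lawson adaptation essentially re-derives the special case of Hoelzel's theorem relevant here, and it makes transparent \emph{why} the threshold is $\binom{q-1}{2}$: it is the limiting value of $\frac{\Scal}{2\lambdam}$ on the thin tube $S^{q-1}_\delta$. The cost is exactly the bookkeeping you flag in steps (2)--(4); in particular, on the handle cap $D^{k+1}\times S^{q-1}_\delta$ with a torpedo on $D^{k+1}$, the maximal eigenvalue could come from the torpedo's round hemisphere rather than the $S^{q-1}_\delta$ factor, so the torpedo radius has to be chosen large relative to $\delta$. The Hoelzel black box absorbs all of this via the convexity of the cone. (Minor: your range $2\le k\le n-3$ should be $0\le k\le n-3$; the connected sum is the case $k=0$.)
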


\begin{proof}
Recall that the condition $\Riem(g)>t$ is equivalent to requiring the positivity of the tensor $\Riemt(g)=\Scal(g)\frac{g^2}{2}-2tR$. The theorem follows directly from Hoelzel's general surgery theorem  \cite{Hoelzel}. We  use the same notations as in \cite{Hoelzel}. Let $C_B({\Bbb{R}}^n)$ denote the  vector space of algebraic curvature operators $\Lambda^2 {\Bbb{R}}^n\rightarrow \Lambda^2 {\Bbb{R}}^n$ satisfying the first Bianchi identity and endowed with the canonical inner product.
For $0<t<n(n-1)/2$, let 
$$C_{\Riemt >0}:=\{R\in C_B({\Bbb{R}}^n): \Riemt(R)>0\},$$
here $\Riemt(R)=\Scal(R)\frac{g^2}{2}-2tR$.  The subset $C_{\Riemt >0}$ is clearly open, convex and it is an ${\rm{O}}(n)$-invariant cone. Furthermore, it is easy to check that $\Riemt(S^{q-1}\times \Bbb{R}^{n-q+1})>0$ if   $t<\frac{(q-1)(q-2)}{2}$ and $q\geq 3$. This completes the proof.

\end{proof}

We come now to the proof of Theorem {\bf B} as follows.
\begin{proof}
First, we prove part a) as follows. Let $0\leq t<\bRiem(M)$. Then there exists a Riemannian metric $g_t$ on $M$ such that $\Riem(g_t)>t$. Since $t<\bRiem(M)\leq \binom{q-1}{2}$ and the codimension of the surgery is $\geq q$, the above theorem proves the existence of a metric $\hat{g}_t$ on $\widehat{M}$ satisfying $\Riem(\hat{g}_t)>t$. The part a) of the  theorem follows directly as $t$ was chosen arbitrary in the above interval.\\
Next we prove the second part in a similar way.
Let $ t<\binom{k}{2}$. Then $ t<\bRiem(M)$ and there exists a Riemannian metric $g_t$ on $M$ such that $\Riem(g_t)>t$. Since $t<\binom{(k+1)-1}{2}$ and the codimension of the surgery is $\geq k+1$, the above theorem proves the existence of a metric $\hat{g}_t$ on $\widehat{M}$ satisfying $\Riem(\hat{g}_t)>t$. This completes the proof of the theorem.
\end{proof}

The following corollary follows directly from the above Theorem
\begin{corollary}\label{surgery-n-1}
\begin{enumerate}
\item[a)] Let $M_1$ and $M_2$ be two compact manifolds of dimensions $n\geq 3$ and such that $0<\bRiem(M_1)\leq \frac{(n-1)(n-2)}{2}$ and $\bRiem(M_2)\geq \bRiem(M_1)$. Then their connected sum  satisfies
\[\bRiem(M_1\#M_2)\geq \bRiem(M_1).\]
\item[b)] Let $M$ be a compact $n$-manifold with $n\geq 4$ and $0< \bRiem(M)\leq \frac{(n-3)(n-2)}{2}$. If a compact manifold $\widehat{M}$ is obtained from $M$ by surgeries of 
codimensions $\geq n-1$ then $\bRiem(\widehat{M})\geq \bRiem(M)$. \\

\end{enumerate}
\end{corollary}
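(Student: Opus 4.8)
The plan is to read off both statements from Theorem~{\bf B}~a) by choosing the surgery codimension parameter $q$ suitably and verifying its hypotheses; no new geometric input is needed.

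Part b) is the more immediate of the two. A manifold $\widehat{M}$ obtained from $M$ by a finite sequence of surgeries of codimension $\geq n-1$ can be reached one surgery at a time, so it suffices to handle a single surgery of codimension $\geq n-1$. I would apply Theorem~{\bf B}~a) with $q=n-1$: the condition $3\leq q\leq n$ holds because $n\geq 4$, the hypothesis $\bRiem(M)\leq\binom{q-1}{2}$ becomes $\bRiem(M)\leq\frac{(n-2)(n-3)}{2}$, which is assumed, and the surgery has codimension $\geq n-1=q$. Hence $\bRiem(\widehat{M})\geq\bRiem(M)$, and iterating over the finitely many surgeries completes part b).

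For part a), the point is that $M_1\#M_2$ arises by surgery. Embedding $S^0$ with trivial normal bundle by placing one point in $M_1$ and one in $M_2$, removing $S^0\times D^n$ (i.e.\ a small ball from each summand) and gluing in $D^1\times S^{n-1}$ produces exactly $M_1\#M_2$ from the disjoint union $M_1\sqcup M_2$; this is a surgery of codimension $n$. I would then apply Theorem~{\bf B}~a) to the compact $n$-manifold $M_1\sqcup M_2$ with $q=n$ (so $3\leq q\leq n$ since $n\geq 3$): using the natural convention $\bRiem(M_1\sqcup M_2)=\min\{\bRiem(M_1),\bRiem(M_2)\}$ together with $\bRiem(M_2)\geq\bRiem(M_1)$ and $\bRiem(M_1)\leq\frac{(n-1)(n-2)}{2}=\binom{n-1}{2}=\binom{q-1}{2}$, the hypothesis of Theorem~{\bf B}~a) holds, and it yields $\bRiem(M_1\#M_2)\geq\bRiem(M_1\sqcup M_2)=\bRiem(M_1)$.

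The only step requiring any care is the passage through the disconnected manifold $M_1\sqcup M_2$ in part a): one must check that Hoelzel's surgery theorem, and therefore the reformulation of Theorem~{\bf B} proved above via it, applies to compact manifolds that need not be connected --- which it does, since neither the cone condition on algebraic curvature operators nor the surgery construction sees the global topology. If one prefers to avoid this, the same argument can be run concretely: for each $t<\bRiem(M_1)$ choose metrics $g_i$ on $M_i$ with $\Riem(g_i)>t$ (possible as $\bRiem(M_2)\geq\bRiem(M_1)>t$), form $g_1\sqcup g_2$, note $t<\binom{n-1}{2}$, and apply the surgery construction of Theorem~{\bf B} directly to obtain a metric on $M_1\#M_2$ with $\Riem>t$; then let $t\uparrow\bRiem(M_1)$. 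I do not expect a genuine obstacle here.
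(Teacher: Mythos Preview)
Your proposal is correct and matches the paper's own (implicit) argument: the paper simply states that the corollary ``follows directly from the above Theorem'' without spelling out the choice of $q$, and you have filled in precisely those details, taking $q=n$ for part~a) and $q=n-1$ for part~b). Your remark about the disconnected manifold $M_1\sqcup M_2$ and the alternative metric-by-metric argument are both valid ways to handle the one subtlety the paper glosses over.
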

\subsection{Proof of Corollary {\bf B}}
\begin{proof}
Corollary {\bf B} results from the above corollary \ref{surgery-n-1}. The proof follows word by word the proof of an analogous result for the $\Ein$ invariant in \cite{modified-Eink}. We will not reproduce it here.
\end{proof}
\subsection{Proof of Theorem ${\mathbf B}^\prime$}
\begin{proof}
We prove the first part as follows. 
Let $M$ be a compact simply connected spin (resp. non-spin) manifold of dimension $\geq 5$. Gromov and Lawson \cite{GroLa}  proved that if $M$ is spin cobordant (resp. oriented cobordant) to a compact manifold $M_1$ then $M$ can be obtained from $M_1$ by surgeries of codimensions $\geq 3$. \\
From another side, according to a result by F\"uhr \cite{Fuhr},  a closed oriented manifold of dimension $\geq 5$ is always oriented cobordant to the total space $M_1$ of $\mathbb{CP}^2$ bundle with structure group the isometry group of the Fubiny-Study metric of $\mathbb{CP}^2$. Theorem ${\bf A}$ shows that $\bRiem(M_1)\geq 2$. It follows from the surgery theorem {\bf B} part b) that $\bRiem(M)\geq 1$. This completes the proof for the non-spin case.\\
If $M$ is spin  with $\bRiem>0$ then it has a positive scalar curvature metric. A theorem of 
 Stolz \cite{Stolz} guarantees that $M$ is spin cobordant  to the total space $M_1$ of an $\mathbb {HP}^2$  bundle with structure group the isometry group of the Fubiny-Study metric of $\mathbb {HP}^2$. Theorem ${\bf A}$ shows that $\bRiem(M_1)\geq 8$ and then $\bRiem(M)\geq 1$ by  the second part of the surgery theorem {\bf B}. This completes the proof of the first part.\\
Next, we prove the second part of the Theorem. Let $M$ be compact and $2$-connected  with $\bRiem>0$ and dimension $\geq 7$. Then $M$ has a canonical spin structure and has a metric with positive scalar curvature. The above mentioned theorem of Stolz shows that $M$ is spin cobordant  to a manifold $M_1$ 
with $\bRiem(M_1)\geq 8$. On the other hand,  a previous result of the author \cite{agag} asserts that $M$  can be obtained from $M_1$ by surgeries of codimension $\geq 4$. The part b) of the surgery theorem {\bf B} shows that $\bRiem(M)\geq 3$.\\
Finally, we prove the last part of the theorem. 
The manifold $M$ is $3$-connected then it has a canonical spin structure and it has a positive scalar curvature metric since $\bRiem(M)>0$.  As above, a consequence of Stolz's result is that  $M$ is spin cobordant  to a compact manifold $M_1$ with $\bRiem(M_1)\geq 8$. On the other hand,  a previous result of Botvinnik and  the author \cite{Bot-Lab1} asserts that if $M$ is  compact $3$-connected  and non-string of dimension $n\geq 9$ that is spin cobordant to a manifold $M_1$, then the manifold $M$ can be obtained from $M_1$ by surgeries of codimension $\geq 5$. The second part of the surgery theorem implies that $\bRiem(M)\geq \binom{4}{2}=6.$ This completes the proof of the Theorem.
\end{proof}

\section{Vanishing theorems: Proof of Theorems  $\mathbf{C}$ and ${\mathbf C}^\prime$}
\subsection{Proof of Theorem {\bf C} and Corollary {\bf C}}
\begin{proof}
Let $s=\frac{(n-1)(n-2)}{2}$. The condition $\bRiem(M)>s$ guarantees the existence of a Riemannian metric $g$ on $M$ such that $\Riem(g)>s$. It follows from Proposition \ref{k-positive} that the Ricci curvature of $g$ is positive and therefore the Betti numbers $b_1$ of $M$ vanish. Next, if $\bRiem(M)=\Riem(g)=s$ for some Riemannian metric $g$ on $M$, then Proposition \ref{k-positive} shows that the Ricci curvature of $g$ is nonnegative and therefore $b_(M)\leq n$ by Bochner theorem. Since here the $\bRiem$ is supposed to be positive then the scalar curvature is positive, so our manifold can't be  a torus and therefore $b_1(M)\leq n-1$ again by Bochner theorem. This completes the proof of Theorem {\bf C}.\\
Next we prove the corollary. Part a) results directly from the theorem as  $b_1(M\times S^1)\not=0$. To prove part b), we 
recall that the standard product metric on $S^{n-1}\times S^1$ has $\Riem$ equal to $s$.  We prove part {\bf c)}  in a similar way. From one side  corollary \ref{surgery-n-1} shows that $\bRiem\big(\#_r(S^{n-1}\times S^1)\big)\geq s$. From another side it cannot be strictly higher than $s$ because of the above positive Ricci curvature obstruction.
\end{proof}

\subsection{Proof of Theorem  ${\mathbf C}^\prime$}
We prove the following more general version of Theorem  ${\mathbf C}^\prime$.
\begin{theorem}\label{gen-bk}
Let $M$ be a compact connected manfold with dimension $n\geq 3$ and let $p$ be an integer such that 
$2\leq p\leq n-2$. We denote by $b_k(M)$ as usual the $k$-th betti number of $M$, then one has
$$\bRiem(M)>\frac{n(n-1)}{2}-\frac{p(n-p)}{2}\implies b_k(M)=0, \,\,\, {\rm for}\,\, p\leq k\leq n-p.$$
\end{theorem}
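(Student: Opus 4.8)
The plan is to run the Bochner technique on $k$-forms, in the spirit of the proof of Theorem~{\bf C} (which is the case $k=1$). Write $N=\binom{n}{2}$ and set $t_{0}=N-\tfrac{p(n-p)}{2}$; since $2\le p\le n-2$ we have $0<t_{0}<N$. From $\bRiem(M)>t_{0}$ we obtain a Riemannian metric $g$ on $M$ with $\Riem(g)>t_{0}$, so by the descent property $\Riem_{t_{0}}(g)=\Scal(g)\tfrac{g^{2}}{2}-2t_{0}R$ is positive definite at every point, equivalently $\Scal(g)>2t_{0}\,\lambda_{\rm max}(g)$ everywhere. Let $\mathcal{R}_{k}$ be the Weitzenb\"ock curvature endomorphism of $\Lambda^{k}T^{*}M$, so that $\Delta=\nabla^{*}\nabla+\mathcal{R}_{k}$ on $k$-forms. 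I want to show that for such a metric $g$ the operator $\mathcal{R}_{k}$ is positive definite at every point, for every $k$ with $p\le k\le n-p$. Granting this, any harmonic $k$-form $\omega$ satisfies $0=\int_{M}\bigl(|\nabla\omega|^{2}+\langle\mathcal{R}_{k}\omega,\omega\rangle\bigr)$, hence $\omega\equiv0$ and $b_{k}(M)=0$ for $p\le k\le n-p$ by Hodge theory, which is the assertion.

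Thus the theorem reduces to the pointwise algebraic statement: positivity of $\Riem_{t_{0}}(g)$ forces $\mathcal{R}_{k}>0$ for $p\le k\le n-p$. This is the higher-degree analogue of Proposition~\ref{k-positive}: for $k=1$ one has $\mathcal{R}_{1}=\Ric$ and $t_{0}=\tfrac{(n-1)(n-2)}{2}$, and the computation there is exactly the implication $\Riem_{t_{0}}(g)>0\Rightarrow\Ric>0$. For general $k$ I would first reduce, via Poincar\'e duality $b_{k}=b_{n-k}$, to the range $p\le k\le n/2$, on which $k\mapsto k(n-k)$ is increasing so that the endpoint $k=p$ governs the rest. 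The input extracted from $\Riem_{t_{0}}(g)>0$ is twofold: by Proposition~\ref{k-positive}(1) the curvature operator $R$ of $g$ is $\lceil\tfrac{p(n-p)}{2}\rceil$-positive, and, crucially, the inequality $\Scal(g)\tfrac{g^{2}}{2}-2t_{0}R>0$ bounds \emph{every} eigenvalue of $R$ above by $\tfrac{\Scal(g)}{2t_{0}}$, which pins the whole spectrum of $R$ close to $\tfrac{\Scal(g)}{n(n-1)}$. One then feeds this into a refined Weitzenb\"ock estimate for $\mathcal{R}_{k}$ on $k$-forms, of Gallot--Meyer type, to conclude $\mathcal{R}_{k}>0$ for $p\le k\le n/2$.

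The hard part is precisely this last estimate for $k$ near $n/2$: the standard sharp Bochner bound only yields $\mathcal{R}_{k}>0$ from $\max(k,n-k)$-positivity of $R$, which for $k$ close to $n/2$ is strictly stronger than the $\tfrac{p(n-p)}{2}$-positivity coming from the hypothesis (already for $p=2$ and $n\ge6$), so one must genuinely use the spectral pinching $\lambda_{\rm max}(g)<\tfrac{\Scal(g)}{2t_{0}}$ rather than just a $c$-positivity of $R$. Carrying this out by a careful degree-$k$ analysis of $\mathcal{R}_{k}$, parallel to but heavier than the $k=1$ computation in Proposition~\ref{k-positive}, is the technical core. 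Two degenerate ranges are handled apart: for $n=3$ there is no admissible $p$ and the statement is vacuous, and for $n=4$ only $p=2$ occurs, giving $b_{2}(M^{4})=0$ when $\bRiem(M)>4$, which is already contained in Theorem~{\bf D}. Finally, specializing to $p=2$ and combining with Theorem~{\bf C} (which gives $b_{1}=0$, and hence $b_{n-1}=0$ by duality) recovers Theorem~${\mathbf C}^{\prime}$.
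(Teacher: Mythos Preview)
Your overall architecture is correct and matches the paper: choose a metric $g$ with $\Riem_{t_0}(g)>0$ for $t_0=\frac{n(n-1)}{2}-\frac{p(n-p)}{2}$, prove that the Weitzenb\"ock curvature endomorphism $\mathcal R_k$ on $k$-forms is positive for $p\le k\le n-p$, and conclude by Hodge theory. The reduction to $k=p$ via $k\mapsto k(n-k)$ is also exactly what the paper uses implicitly.

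The gap is that you do not actually prove the pointwise estimate $\mathcal R_p>0$; you only announce a route (spectral pinching $\lambda_{\max}<\tfrac{\Scal}{2t_0}$ fed into a ``refined Gallot--Meyer type'' bound) and concede that this is ``the technical core''. That route is plausible but genuinely nontrivial, and as you yourself note, mere $c$-positivity of $R$ with $c=\lceil\tfrac{p(n-p)}{2}\rceil$ is too weak for Petersen--Wink to apply directly, so one would have to develop a new inequality combining the upper bound on $\lambda_{\max}$ with the lower eigenvalue sums --- you have not done this. The paper avoids this difficulty entirely by a short computation in the Kulkarni--Nomizu (double forms) algebra: writing $W_p=\tfrac{g^{p-1}}{(p-1)!}\Ric-2\tfrac{g^{p-2}}{(p-2)!}R$, one checks the identity
\[
t\,W_p \;=\; \frac{g^{p-2}}{(p-2)!}\,\Riem_t(g)\;+\;\frac{g^{p-1}}{(p-1)!}\Bigl(t\,\Ric-\tfrac{(p-1)}{2}\Scal\, g\Bigr).
\]
The first summand is nonnegative because $\Riem_t(g)>0$ and multiplication by $g^{p-2}$ preserves positivity. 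For the second, positivity of $\tfrac{g^{p-1}}{(p-1)!}A$ amounts to every sum of $p$ eigenvalues of $A=t\,\Ric-\tfrac{(p-1)}{2}\Scal\, g$ being positive; this follows by first tracing $\Riem_t(g)>0$ to get $(n-1)\Scal\, g-2t\,\Ric>0$, summing $(n-p)$ of those inequalities, and using $2t>n(n-1)-p(n-p)$. No spectral pinching or Gallot--Meyer refinement is needed. In short: your plan is sound, but the missing ingredient is precisely this two-term decomposition of $W_p$, which turns the ``hard part'' you flag into a two-line trace computation rather than a new curvature-operator estimate.
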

\begin{proof}
We shall use double forms formalism for the Weitzenb\"ock cuvature term \cite{Labbi-nachr}. The curvature term in Weitzenb\"ock formula once applied to $p$-forms takes the form
\[ W_p=\frac{g^{p-1}}{(p-1)!}\Ric-2\frac{g^{p-2}}{(p-2)!}R.\]
On the other hand we have
\[\frac{g^{p-2}}{(p-2)!}\Riemt=\frac{g^{p-2}}{(p-2)!}\bigg( \Scal \frac{g^2}{2}-2tR\bigg)=
\frac{p(p-1)\Scal}{2}\frac{g^p}{p!}-2t\frac{g^{p-2}}{(p-2)!}R.\]
Consequently, for $t>0$ we get 
\[tW_p=\frac{g^{p-2}}{(p-2)!}\Riemt+\frac{g^{p-1}}{(p-1)!}\bigg(t\Ric-\frac{(p-1)\Scal}{2}g\bigg).\]

We will show that under the theorem hypothesis where $t>\frac{n(n-1)}{2}-\frac{p(n-p)}{2}$ the second term in the last sum is posiitve as well. This amounts to proving that the sum of the lowest $p$ eigenvalues of $t\Ric-\frac{(p-1)\Scal}{2}g$ is positive.  Recall that the condition $\bRiem(M)>t$ implies the existence of a Riemannian metric on $M$ with $\Riemt>0$. After taking the trace, one can see that $(n-1)\Scal\,  g-2t\Ric>0$. Taking the sum of $(n-p)$ eigenvalues of the later, we see that 
\[(n-1)(n-p)\Scal-2t(\Scal -\sum_{\i\in I}\rho_i)>0,\]
where $\rho_i$ denotes the eigenvalues of Ricci and $I\subset \{1,2,...,n\}$ is any subset of indices of length $p$. Therefore, we get
\[2t\sum_{\i\in I}\rho_i-p(p-1)>2t-(n-1)(n-p)-p(p-1)>0.   \]
This completes the proof of the theorem.

\end{proof}

\begin{remark}
Let $t=\frac{(n-1)(n-2)}{2}+p$.
The condition $\bRiem(M)>t$ implies the existence of a Riemannian metric on $M$ with $\Riemt>0$. Proposition \ref{k-positive} implies then that the Riemann curvature tensor of $g$ is $(\frac{n(n-1)}{2}-t)$-positive, that is $(n-(p+1))$-positive. Since $ p\leq \frac{n-2}{2}$ then $p+1\leq n/2$ and therefore a vanishing theorem of  Petersen-Wink \cite{Pet-Win} shows the vanishing of Betti numbers $b_k(M)=0$ for $1\leq k\leq p+1$ and 
$n-p-1\leq k\leq n-1.$ 
\end{remark}

\section{Miscellaneous results}

\subsection{Positive $\Gamma_2(R)$ curvature}

Let $\sigma_1(R)$ and $\sigma_2(R)$ be the first two elementary symmetric functions in the eigenvalues of the Riemann curvature operator. If $\sigma_1(R)>0$ and $\sigma_2(R)>0$, we shall write $\Gamma_2(R)>0$.
Note that $\sigma_1(R)=\frac{\Scal(R)}{2}$ and $8\sigma_2(R)=\Scal^2(R)-4||R||^2$. In particular, 
$$\Gamma_2(R)>0\iff \Scal(R)>2||R||.$$
 The following theorem shows in particular that the product $S^2\times T^p$ has positive scalar curvature but does not allow at the same time both $\Scal>0$ and $\sigma_2(R)>0$.
\begin{theorem}
If a Riemannian manifold $(M,g)$ has positive $\Gamma_2(R)$ curvature then $\bRiem(M)>1$. In particular, the product $S^2\times T^p$  does not support any metric with  positive $\Gamma_2(R)$ curvature (at least for $p\leq 5$). The same is true for the connected sum $(S^2\times T^{p})\# M$ for any compact manifold $M$ of dimension $p+2\leq 7$.
\end{theorem}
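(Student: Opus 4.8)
The plan is to reduce the first (and only substantial) assertion to the elementary pointwise inequality $\lambda_{\max}(R)\le\|R\|$ for the curvature operator, and then to feed the resulting bound on $\bRiem$ into the obstruction results already proved in the paper.

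First I would unwind the hypothesis $\Gamma_2(R)>0$. Writing $\lambda_1,\dots,\lambda_N$, $N=\binom n2$, for the eigenvalues of the curvature operator $R$ at a point, one has $\sum_i\lambda_i=\tfrac12\Scal=\sigma_1(R)$ and $\|R\|^2=\sum_i\lambda_i^2$, so the identity $\sigma_1(R)^2=\sum_i\lambda_i^2+2\sigma_2(R)$ gives exactly $8\sigma_2(R)=\Scal^2-4\|R\|^2$; hence $\Gamma_2(R)>0$ means $\Scal>0$ and $\Scal>2\|R\|$ at that point. Now $\Scal>0$ forces $\sum_i\lambda_i>0$, so $\lambda_{\max}>0$, and $\|R\|^2=\sum_i\lambda_i^2\ge\lambda_{\max}^2$ gives $\|R\|\ge\lambda_{\max}$. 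Therefore $\Scal>2\|R\|\ge2\lambda_{\max}>0$ at every point of $M$, i.e. $\tfrac{\Scal}{2\lambda_{\max}}>1$ pointwise. Since $\lambda_{\max}$ is positive and continuous on the compact manifold $M$, the function $x\mapsto\tfrac{\Scal(x)}{2\lambda_{\max}(x)}$ is continuous and attains its minimum, so $\Riem(g)=\inf_M\tfrac{\Scal}{2\lambda_{\max}}>1$, whence $\bRiem(M)\ge\Riem(g)>1$.

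The statements about $S^2\times T^p$ and its connected sums then follow by contradiction. For $S^2\times T^p$ with $p\le5$ (so $n:=p+2\le7$), Theorem {\bf E} gives $\bRiem(S^2\times T^p)=\binom22=1$; a metric with $\Gamma_2(R)>0$ would force $\bRiem>1$ by the first part, a contradiction. For the connected sum $(S^2\times T^p)\#M$ of dimension $n=p+2\le7$ with $p\ge1$: if it admitted a metric with $\Gamma_2(R)>0$, then $\bRiem\bigl((S^2\times T^p)\#M\bigr)>1=\binom{n-p}{2}$ (since $n-p=2$), so by Theorem \ref{riemcp} the manifold $(S^2\times T^p)\#M$ would carry a metric with $\mathcal{C}_p=\mathcal{C}_{n-2}>0$; this contradicts the result of Chen \cite{Chen} (the non-existence statement used in the proof of Theorem {\bf G}, part a), namely that $(S^2\times T^{n-2})\#M$ has no metric of positive $\mathcal{C}_{n-2}$ curvature when $n\le7$. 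For $p=0$ the manifolds in question are surfaces, on which $\sigma_2$ vanishes identically, so there the conclusion is vacuous.

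The argument is short because the hard input is external: it is the Brendle--Hirsch--Johne / Chen non-existence theorems, already invoked earlier, that do the real work. On our side the only points demanding a little care are matching the normalization in $8\sigma_2(R)=\Scal^2-4\|R\|^2$ (equivalently, recording that $\|R\|$ is the Hilbert--Schmidt norm $\sqrt{\sum_i\lambda_i^2}$ of the curvature operator) and the compactness step upgrading the pointwise bound $\Scal/2\lambda_{\max}>1$ to the strict inequality $\Riem(g)>1$.
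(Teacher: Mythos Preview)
Your proof is correct and follows essentially the same route as the paper's. The only cosmetic difference is that the paper packages the pointwise step as ``$\sigma_1,\sigma_2>0\Rightarrow t_1(R)>0$'' for the first Newton transformation $t_1(R)=\tfrac{\Scal}{4}g^2-R=\tfrac12\Riem_1(g)$, whereas you unwind this explicitly via $\Scal>2\|R\|\ge 2\lambda_{\max}$; these are the same computation. For the connected sum, the paper cites Theorem~{\bf G}, while you go directly to Theorem~\ref{riemcp} and Chen's obstruction---this is in fact cleaner, since the statement being proved does not assume $\bRiem(M)\ge 1$, which is a hypothesis of Theorem~{\bf G} but is only needed for the lower bound there, not the upper bound you use.
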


\begin{proof}
We remark that the first Newton transformation of the curvature operator is $t_1(R)=\frac{\Scal}{4}g^2-R=\frac{1}{2}(\Riem_1(g)$. It is classic that if $\sigma_1$ and $\sigma_2$ of an operator are positive then its first Newton transformation is positive. Then $\Riem_1(g)>0$, that is $\Riem(g)>1$. This cannot take place for the products $S^2\times T^p$ with $p\leq 5$ by Theorem {\bf E} and for the connected sum as in the theorem by Theorem {\bf G}.  This completes the proof.
\end{proof}
\begin{remark}
The author believes that Dirac operator techniques may help to prove that the positivity of $\Scal -2||R||$ is not allowed on products of $S^2\times T^q$, for all $q$, these are somehow partially enlargeable manifolds.
\end{remark}

\subsection{The small $\Riem$ invariant}
For a fixed Riemannian metric $g$ on a compact $n$-manifold $M$ and for $s<t<0$, the tensors $\Riemt(g)$ enjoy the following descent property
$$\Riems>0\implies\Riemt>0\implies \Scal >0.$$
We therefore define the metric invariant
\[ \riem(g):=\inf \{t<0:\, \Riemt(g)>0\}.\]
We set it equal to $-\infty$ if the above set is unbounded below and equal to zero if that set is empty.\\
It is not difficult to see that  $\riem(g)=-\infty$ if and only if the Riemann tensor $R$ is nonnegative and with positive scalar curvature. \\
We define the smooth invariant $\briem(M):=\inf \{\riem(g)\colon  g\in {\mathcal M}\},$
where ${\mathcal M}$ denotes the space of all  Riemannian metrics on $M$.
It  is remarkable that this invariant remains unchanged after surgeries, precisely we have
\begin{theorem}
Let $M$ be a compact manifold of dimension $n\geq 4$.
If a compact manifold $\widehat{M}$ is obtained from $M$ by surgeries of 
codimensions $\geq 3$ but not equal to $n-1$ then $\briem(\widehat{M})= \briem(M)$. \\
\end{theorem}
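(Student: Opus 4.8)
The plan is to reduce to the case of a single surgery and then prove the resulting equality by two applications of a Gromov--Lawson--Hoelzel type surgery statement: one for the given surgery and one for its reverse. For a compact $n$-manifold $M$ set
\[
T_M:=\{\,t<0\ :\ \Riemt(h)>0\ \text{for some Riemannian metric $h$ on $M$}\,\}.
\]
By the descent property $\Riems>0\Rightarrow\Riemt>0$ for $s<t<0$, the set $T_M$ is an interval with right endpoint $0$, and $\briem(M)$ is completely determined by $T_M$ (it equals $\inf T_M$, with the convention $\briem(M)=0$ when $T_M=\emptyset$). Hence it suffices to show that $T_M=T_{\widehat{M}}$ whenever $\widehat{M}$ is obtained from $M$ by one surgery of codimension $c$ with $c\in\{3,4,\dots,n-2\}\cup\{n\}$; the statement of the theorem then follows by applying this successively to the surgeries in the given sequence.

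The first ingredient is the one-way lemma: \emph{for every fixed $t<0$, the existence of a metric $g$ with $\Riemt(g)>0$ on a compact $n$-manifold is preserved under any surgery of codimension $\ge 3$.} This is proved exactly as in the reformulation of Theorem~{\bf B}: the set $C_{\Riemt>0}=\{R\in C_B(\mathbb{R}^n):\Riemt(R)>0\}$ is an open convex $\mathrm{O}(n)$-invariant cone, so by Hoelzel's surgery theorem \cite{Hoelzel} it is enough that for each codimension $c\ge 3$ the model operator of the surgery, namely that of $S^{c-1}\times\mathbb{R}^{n-c+1}$, lies in $C_{\Riemt>0}$. But the eigenvalues of $\Riemt$ on this product are $\Scal-2t\lambda_i$ with $\lambda_i\in\{0,1\}$ and $\Scal=(c-1)(c-2)$, all of which are $\ge(c-1)(c-2)>0$ since $t<0$ and $c\ge 3$. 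Applying this to the surgery carrying $M$ to $\widehat{M}$ (whose codimension is $\ge 3$) gives the inclusion $T_M\subseteq T_{\widehat{M}}$, hence $\briem(\widehat{M})\le\briem(M)$.

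For the reverse inclusion $T_{\widehat{M}}\subseteq T_M$, I recover $M$ from $\widehat{M}$ by a surgery that is \emph{again} of codimension $\ge 3$, and apply the one-way lemma to it. If the surgery was performed along an embedded $S^k\times D^{n-k}$ with $n-k=c$, then $\widehat{M}$ contains the belt sphere $S^{c-1}$ with trivial normal bundle $D^{n-c+1}$, and surgery along it returns $M$; this reverse surgery has codimension $n-c+1$. For $3\le c\le n-2$ one has $3\le n-c+1\le n-2$, so the one-way lemma applies directly (this range is non-empty only for $n\ge 5$). The remaining value $c=n$ is a connected sum: since $M$ is connected, $\widehat{M}\cong M\#(S^1\times S^{n-1})$ (or the twisted $S^{n-1}$-bundle over $S^1$), and $M$ is recovered from $\widehat{M}$ by a framed surgery along the circle $S^1\times\{\ast\}$ of the $S^1\times S^{n-1}$ summand; this surgery has codimension $n-1\ge 3$ (here the hypothesis $n\ge 4$ is used), and its model $S^{n-2}\times\mathbb{R}^2$ lies in $C_{\Riemt>0}$ because $\Scal(S^{n-2})=(n-2)(n-3)>0$ for $n\ge 4$. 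In all admissible cases the one-way lemma gives $T_{\widehat{M}}\subseteq T_M$, so $T_M=T_{\widehat{M}}$ and therefore $\briem(\widehat{M})=\briem(M)$.

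Both exclusions in the hypothesis are precisely what the reverse step needs: the reverse of a codimension-$(n-1)$ surgery is a codimension-$2$ surgery, whose model $S^1\times\mathbb{R}^{n-1}$ has vanishing scalar curvature and hence is \emph{not} in $C_{\Riemt>0}$, forcing $c=n-1$ to be omitted; and the inequality $\Scal(S^{n-2})=(n-2)(n-3)>0$ required in the $c=n$ case forces $n\ge 4$. I expect the only genuinely delicate point, beyond the routine verification that Hoelzel's theorem applies to the finitely many surgeries involved, to be the bookkeeping in the $c=n$ case — checking that, also in the twisted case, there is a framed circle inside the $S^1\times S^{n-1}$ summand whose surgery undoes the connected sum back to $M$.
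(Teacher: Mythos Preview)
Your proof is correct and follows essentially the same route as the paper's: Hoelzel's surgery theorem gives the one-way inequality for codimension $\ge 3$, and equality follows by reversing the surgery (codimension $n-c+1$ for $3\le c\le n-2$, and a codimension-$(n-1)$ surgery on the circle in the $S^1\times S^{n-1}$ summand when $c=n$). You are in fact slightly more careful than the paper---you get the direction of the inequality right (the paper writes $\ge$ where $\le$ is meant), and you flag the twisted $S^{n-1}$-bundle case that the paper omits.
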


\begin{proof}
Using Hoelzel's surgery theorem \cite{Hoelzel}, it is easy, as in the proof of Theorem B,  to see that  $\briem(\widehat{M}) \geq  \briem(M)$ as far as the codimension of the surgery is $\geq 3$.
To prove equality we apply a reversed surgery as in \cite{petean}. In fact one can recover the initial  manifold $M$ from the new manifold $\widehat{M}$ by applying a surgery of codimension $n-q+1$.  Consequently one gets $\briem(M)\geq \briem(\widehat{M})$ if the new codimension  $n-q+1\geq 3$. Consequently, the $\briem$ is unchanged if $3\leq q\leq n-2$. The case of $q=n$ can be ruled out, as in this cas $\widehat{M}$ is diffeomorphic to the connected sum of $M$ with the product $S^1\times S^{n-1}$.  One can then recover $M$ by killing the circle by the mean of a  surgery of codimension only $n-1\geq 3$. This completes the proof.
\end{proof}
One consequence of this theorem is that simply connected compact PSC manifolds of dimensions $\geq 5$ have their $\briem$ equal to $-\infty$.

\subsection{The $\Riem$ invariant of a conformal class}

Let $[g]$ denotes the  conformal class of the metric $g$. We define the $\Riem$ of the conformal class $[g]$ as
\[\Riem([g])= \sup\{\Riem(g):  g\in [g]\}.\]

As above, we set it equal to zero if the conformal class does not contain a psc metric. We  prove the following vanishing theorem and a consequence of it which determines the conformal $\Riem$ for some conformally flat classes.

\begin{theorem}
Let $(M,g)$ be a compact oriented conformally flat $n$-manifold  and $p$ an integer such that $0<p\leq n/2$.  Then one has
$$\Riem([g])>\frac{(n-1)(n-2p)}{2}\implies b_k(M)=0,\,\,\, {\rm for}\,\, p\leq k\leq n-p.$$
In particular, for any $n>2p\geq 2$, let $g_0$ denotes the product metric on the product of two space forms of opposite signs  $S^{n-p}\times H^p$, then one has
$$\Riem([g_0])=\Riem(g_0)=\frac{(n-1)(n-2p)}{2}.$$
\end{theorem}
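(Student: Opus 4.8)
The plan is to run a Bochner/Weitzenböck argument in the conformally flat setting, entirely parallel to the proof of Theorem~\ref{gen-bk}, but using the fact that for a conformally flat metric the Riemann curvature operator is determined by the Schouten tensor. First I would recall that if $(M,g)$ is conformally flat then $R = g \owedge A$ (Kulkarni--Nomizu product of $g$ with the Schouten tensor $A = \frac{1}{n-2}(\Ric - \frac{\Scal}{2(n-1)}g)$), so the eigenvalues of the curvature operator on $\wedge^2$ are the sums $\mu_i + \mu_j$ ($i<j$) where $\mu_1,\dots,\mu_n$ are the eigenvalues of $A$. In particular $\lambdam = \mu_{i_1}+\mu_{i_2}$, the sum of the two largest eigenvalues of $A$, and $\Scal = 2(n-1)\tr A$. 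The condition $\Riem(g) > \frac{(n-1)(n-2p)}{2}$ then translates, via the formula $\Riem(g) = \inf_M \frac{\Scal}{2\lambdam}$, into a pointwise pinching inequality on the eigenvalues of $A$: roughly, $(n-1)\tr A > \frac{(n-1)(n-2p)}{2}(\mu_{\max_1}+\mu_{\max_2})$, i.e. the two largest eigenvalues of $A$ are controlled by the trace.

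Next I would feed this into the Weitzenböck curvature term for $p$-forms. Using the double-form identity from the proof of Theorem~\ref{gen-bk},
\[
W_p = \frac{g^{p-1}}{(p-1)!}\Ric - 2\frac{g^{p-2}}{(p-2)!}R,
\]
and substituting $R = g\owedge A$, one gets an expression for $W_p$ purely in terms of $g$ and $A$; concretely, the eigenvalue of $W_p$ on the $p$-form $e_I$ ($I$ a $p$-subset) should come out to a positive multiple of $\sum_{i\in I}\mu_i \cdot(\text{something}) + (\text{trace terms})$ — essentially the same computation as in Theorem~\ref{gen-bk} but with $\Ric$ replaced by its conformally-flat expression in terms of $A$, which replaces the role of the Ricci operator. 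The key point is that the eigenvalues of $W_p$ acting on $p$-forms, for $p \le k \le n-p$, are bounded below by a positive multiple of the sum of the $p$ smallest eigenvalues of $A$ minus a correction, and the pinching inequality derived in the first step is exactly what is needed to make this positive for all $p \le k \le n-p$. Since $g$ is a representative of $[g]$ achieving $\Riem > \frac{(n-1)(n-2p)}{2}$ (or a sequence of them), $W_k > 0$ on $k$-forms, so by Bochner $b_k(M) = 0$ for $p\le k\le n-p$.

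For the second assertion, on $S^{n-p}\times H^p$ with the product metric $g_0$ of a unit sphere and a hyperbolic factor of curvature $-1$, the Schouten tensor has eigenvalues that are constant: the sphere directions contribute one value and the hyperbolic directions another, and one computes directly (this is Example~(3) in the introduction, $\Riem(g_0) = \frac{(n-1)(n-2p)}{2}$) that $\lambdam$ is realized by a $2$-plane inside the spherical factor. Since $S^{n-p}\times H^p$ is conformally flat and has $b_p \ne 0$ (the $H^p$ factor, or rather a compact quotient — one should phrase the Betti-number statement for compact conformally flat manifolds, so I would take a compact hyperbolic manifold $\Sigma^p$ in place of $H^p$ and note $b_p(S^{n-p}\times\Sigma^p)\ne 0$), the vanishing theorem forces $\Riem([g_0]) \le \frac{(n-1)(n-2p)}{2}$; combined with $\Riem([g_0]) \ge \Riem(g_0) = \frac{(n-1)(n-2p)}{2}$ this gives equality.

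\textbf{Main obstacle.} The delicate step is the second one: showing that the pointwise pinching on the eigenvalues of the Schouten tensor implied by $\Riem(g) > \frac{(n-1)(n-2p)}{2}$ is \emph{exactly} strong enough to force positivity of the $k$-form Weitzenböck term for the whole range $p\le k\le n-p$. This is a linear-algebra estimate on sums of eigenvalues of a symmetric operator under the constraint that its two largest eigenvalues are dominated by its trace; one must be careful that the worst case (many equal eigenvalues, or one very negative one) still yields positivity, and that the Kulkarni--Nomizu bookkeeping relating $\lambdam$ of $R$ to the top two eigenvalues of $A$ is done correctly. I expect this to reduce, after the dust settles, to precisely the same inequality chain as at the end of the proof of Theorem~\ref{gen-bk}, with $n-1$ playing the role it does there — which is presumably why the paper states it as a near-immediate corollary.
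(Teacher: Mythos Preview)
Your approach is viable but genuinely different from the paper's. You propose to run the Bochner argument from scratch: translate $\Riemt>0$ into a pinching on the eigenvalues $\mu_1\le\cdots\le\mu_n$ of the Schouten tensor (namely $2\tr A>(n-2p)(\mu_{n-1}+\mu_n)$), substitute $R=gA$ into the Weitzenb\"ock term to get $W_k=(n-2k)\frac{g^{k-1}}{(k-1)!}A+k(\tr A)\frac{g^k}{k!}$, and then prove the linear-algebra inequality $(n-2k)\sum_{i\in I}\mu_i+k\tr A>0$ for all $|I|=k$, $p\le k\le n-p$. The paper instead short-circuits all of this: it observes that in the conformally flat case $(n-2)\Riemt(g)$ factors as a positive constant times $g\,\Ein_T$ with $T=\frac{4t(n-1)}{(n-1)(n-2)+2t}$, checks that $t>\frac{(n-1)(n-2p)}{2}$ is equivalent to $T>\frac{(n-1)(n-2p)}{n-p-1}$, and then invokes a vanishing theorem already proved in \cite{Labbi-conformal} for the conformal $\Ein$ invariant at that threshold. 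So the paper's proof is essentially a change of variables plus a citation; your proof would reprove the content of that citation. Your route is more self-contained; the paper's buys brevity by reducing to prior work.

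Your treatment of the second assertion is essentially the same as the paper's: use $b_p\ne 0$ (for a compact hyperbolic quotient in the second factor) to get the upper bound on $\Riem([g_0])$ from the first part, and the direct computation $\Riem(g_0)=\frac{(n-1)(n-2p)}{2}$ for the lower bound. That part is fine. The ``main obstacle'' you flag is real in your approach but simply does not arise in the paper's, since the eigenvalue estimate is packaged inside the cited $\Ein$ vanishing theorem.
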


\begin{proof}

In what follows,  products of tensors are Kulkarni-Nomizu products.
Recall tor a conformally flat metric $g$ one has $R=gA$, where $A$ is the Schouten tensor. Consequently, the $\Riemt$ tensor is determined by the Ricci tensor as follows
\begin{equation}
\begin{split}
(n-2)\Riemt(g)=&\bigg(\frac{(n-1)(n-2)+2t}{n-1}\bigg)\Scal \frac{g^2}{2}-2tg\,\Ric\\
=& \bigg(\frac{(n-1)(n-2)+2t}{2(n-1)}\bigg)g\,\Ein_T.
\end{split}
\end{equation}
Here, $\Ein_T=\Scal g-T\,\Ric$ and $T=\frac{4t(n-1)}{(n-1)(n-2)+2t}$. At this stage we use a vanishing theorem for the $\Ein$ invariant \cite{Labbi-conformal}, which gurantees the vanishing of the $b_k(M)$ as in the Theorem that we are proving under the condition that $T>\frac{(n-1)(n-2p)}{n-p-1}$. It is straightforward to see that this last condition is equivalent to $t>\frac{(n-1)(n-2p)}{2}$. This proves the first part. For the second part, note that  from one hand the $p$-th Betti number of the above product is not zero therefore $\Riem([g_0])\leq \frac{(n-1)(n-2p)}{2}$. On the other hand the product metric satisfies $\Riem(g_0)=\frac{(n-1)(n-2p)}{2}$. This completes the proof.
\end{proof}
\begin{remark}
It is remarkable here that the conformal $\Riem$ of the above product metric is not a binomial coefficient number of the form $\binom{k}{2}$, The author doesn't know whether  $\bRiem(S^{n-p}\times H^p)$ equals $\frac{(n-1)(n-2p)}{2}.$ 
We ignore as well  whether there exist any compact manifold whose $\bRiem$ is not a binomial coefficient number of the form $\binom{k}{2}$ for some integer $k$.
\end{remark}
\subsection{Minimal vs. Maximal PSC compact manifolds}
The smooth $\bRiem$  invariant defines a {\emph pre-order} on the set of all compact PSC manifolds with a fixed dimenion $n$. The maximal manifolds are by B\"ohm-Wilking theorem (see Theorem ${\bf D}$)  the manifolds with constant positive sectional curvature (space forms). In the other extreme, one may ask the following questions:
 {\emph{What are the PSC compact $n$-manifolds with minimal $\bRiem$ ?}}.  {\emph{ What are the PSC compact simply connected (resp.  2-connected) manifolds with minimal $\bRiem$ ?}}.\\
For instance, these are more specific questions:
\begin{itemize}
\item Are there  compact manifolds with $0<\bRiem<1$?  Is  $S^2\times T^{n-2}$  minimal among  all compact PSC $n$-manifolds? 
\item Are there  compact simply connected manifolds with $0<\bRiem<2$? 
\end{itemize}
The problem is very well understood in dimension 3. In fact, it results from the classification of compact PSC 3-manifolds that the minimal PSC manifolds are those with $\bRiem=1$ and they are either $S^2\times S^1$ or connected sums of copies of the later with spherical space forms.
\smallskip\noindent

\subsection{Range of the $\Riem$ functional}
Let $M^n$ be a compact PSC manifold and denote by $\mathcal{R}^+(M^n)$ the space of PSC metrics on $M^n$ endowed with the usual  $C^\infty$-topology. We consider the functional
\begin{equation*}
\begin{split}
\Riem: \mathcal{R}^+(M^n)&\to \mathbb{R}\\
g&\to \Riem(g).
\end{split}
\end{equation*}
This functional is continuous and defines a topological stratification of the space $\mathcal{R}^+(M^n)$. The range of the above functional  is then an interval in $\mathbb{R}$ once restricted to  connected components in $\mathcal{R}^+(M^n)$. In two dimensions the $\Riem$ functional is constant equal to $1$. One wonder to see what should be the range of this functional for a general compact manifold $M^n$ for $n\geq 3$. The next proposition reveals a small piece of the story
\begin{proposition}
Let $n\geq 2$ and $\mathcal{R}^+(S^n)$ denotes the space of PSC metrics on the standard smooth sphere $S^n$, then one has
$$\Riem\bigl(\mathcal{R}^+(S^{2n-1})\bigr)=(0, N],\,\,\, {\rm with}\,\, N=\binom{2n-1}{2}.$$
\end{proposition}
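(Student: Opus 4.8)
The plan is to prove the two inclusions $\Riem\bigl(\mathcal{R}^+(S^{2n-1})\bigr)\subseteq(0,N]$ and $(0,N]\subseteq\Riem\bigl(\mathcal{R}^+(S^{2n-1})\bigr)$, where $N=\binom{2n-1}{2}$. The first is immediate from the formula $\Riem(g)=\inf_{S^{2n-1}}\frac{\Scal(g)}{2\lambda_{\max}(g)}$: for $g\in\mathcal{R}^+(S^{2n-1})$ one has $\Scal(g)>0$, and since $\Scal(g)$ equals twice the sum of the $N$ eigenvalues of the curvature operator, $0<\Scal(g)\le 2N\lambda_{\max}(g)$, so $\Riem(g)\in(0,N]$. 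Moreover the round metric $g_{1}$ has $\lambda_{\max}=1$ and $\Scal=(2n-1)(2n-2)$, hence $\Riem(g_{1})=N$; the endpoint $N$ is attained, so it remains to realize every value in $(0,N)$ by a psc metric on $S^{2n-1}$.

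For this I would use the Berger-type deformations of the round metric: let $\pi\colon S^{2n-1}\to\mathbb{CP}^{n-1}$ be the complex Hopf fibration (this is where the hypothesis that the sphere be odd-dimensional enters), and let $g_{t}$, $t>0$, be the canonical variation of $g_{1}$ obtained by multiplying the vertical (fiber) part of the metric by $t^{2}$; the fibers stay totally geodesic and $t\mapsto g_{t}$ is a smooth curve of metrics with $g_{1}$ round. O'Neill's formulas for this classical submersion (cf. the proof of Theorem~{\bf A} above and \cite{Besse}) give that, in an orthonormal frame adapted to $\pi$, the $2$-planes containing the vertical direction have sectional curvature $t^{2}$, the holomorphic horizontal $2$-planes have sectional curvature $4-3t^{2}$, and the remaining coordinate $2$-planes have sectional curvature $1$; summing the $\binom{2n-1}{2}$ coordinate sectional curvatures gives
\[
\Scal(g_{t})=2(n-1)\bigl(2n-t^{2}\bigr),
\]
which is consistent with the round value at $t=1$. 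In particular $g_{t}$ has positive scalar curvature exactly for $t\in(0,\sqrt{2n})$, so $g_{t}\in\mathcal{R}^+(S^{2n-1})$ on this range.

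Since a $2$-plane containing the vertical direction realizes the sectional curvature $t^{2}$, we have the trivial lower bound $\lambda_{\max}(g_{t})\ge t^{2}$, and therefore
\[
0<\Riem(g_{t})=\frac{\Scal(g_{t})}{2\lambda_{\max}(g_{t})}\le\frac{(n-1)(2n-t^{2})}{t^{2}},
\]
and the right-hand side tends to $0$ as $t\to\sqrt{2n}^{-}$ (here it is essential that $t^{2}$ stays bounded away from $0$ while $\Scal(g_{t})\to 0$, so that $\lambda_{\max}$ does not collapse with it). As $g\mapsto\Riem(g)$ is continuous and $t\mapsto g_{t}$ is continuous, the set $\{\Riem(g_{t}):t\in[1,\sqrt{2n})\}$ is a connected subset of $\mathbb{R}$, i.e. an interval; it contains $\Riem(g_{1})=N$, is contained in $(0,N]$, and has infimum $0$, not attained (all $g_{t}$ are psc). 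The only interval with these three properties is $(0,N]$, which yields the reverse inclusion and finishes the proof.

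I do not expect any serious obstacle here: the whole argument rests on the curvature bookkeeping for the family $g_{t}$, which reduces to the trivial bound $\lambda_{\max}(g_{t})\ge t^{2}$ together with the explicit scalar-curvature formula above, and the latter is the standard O'Neill computation for the canonical variation of the Hopf fibration (for $n=2$ it is literally the Berger metric on $S^{3}$, with $\Scal(g_{t})=2(4-t^{2})$). The one point to handle with care is getting this O'Neill accounting right; everything else is the intermediate value theorem.
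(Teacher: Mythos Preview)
Your argument is correct. Both you and the paper use the Berger family on $S^{2n-1}$ coming from the complex Hopf fibration (the paper phrases this as geodesic spheres in $\mathbb{CP}^n$ and $\mathbb{CH}^n$, which are exactly these metrics up to reparametrization and scale), so the underlying family is the same. The difference is in how the conclusion is extracted: the paper quotes the Bourguignon--Karcher computation of $\lambda_{\max}$ for these metrics and writes down $\Riem(g_r)$ explicitly, then reads off that the values sweep $(0,N]$; you instead compute only $\Scal(g_t)=2(n-1)(2n-t^2)$, use the trivial bound $\lambda_{\max}(g_t)\ge t^2$ (a sectional curvature), and appeal to continuity of $\Riem$ together with the intermediate value theorem. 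Your route is more elementary---it bypasses the eigenvalue analysis of the curvature operator entirely---at the cost of not producing the exact value of $\Riem(g_t)$ along the path. The paper's explicit formulas, on the other hand, yield extra information (for instance, that along the $\mathbb{CP}^n$ family $\Riem$ descends from $N$ to $n-1$ while the curvature operator stays positive).
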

\begin{proof} We shall use the Berger metrics on odd dimensional spheres.
Let $S^{2n-1}(r)$ denotes the geodesic sphere of radius $r$ in $\mathbb{CP}^n$ with its Fubiny-Study metric $g_{fs}$ and $0<r<\pi/2$. Denote by $g_r$ the induced metric on the embedded spheres, then one can see without difficulties using the computations in \cite{Bour-Kar} that
$$\Riem(g_r)=\frac{4n(n-1)+(2n-2)(2n-1)\cot^2r}{4n+2\cot^2r}.$$
As $r\to \pi/2$,  $\Riem(g_r)$ tends to $n-1$ that is the $\Riem$ of $\mathbb{CP}^{n-1}$ with the Fubini-Study metric. That's not a surprise as the limit submanifold is known to be a totally geodesic submanifold isometric to $\mathbb{CP}^{n-1}$ (conjugate locus). On the other side, as $r\to 0$, $\Riem(g_r)$ tends to $N$ which is the $\Riem$ of the standard round metric on the sphere $S^{2n-1}$. Again this is expected as $\frac{1}{r^2}g_r$ is known to converge $C^2$ to the standard round metric of curvature 1 on $S^{2n-1}$.\\
We remark here that for all $r\in (0, \pi/2)$ the Riemann tensor $R$ is positive but the $\Riem$ values go down till the value of $n-1$ on $S^{2n-1}$.\\
Next, we consider the geodesic spheres $S^{2n-1}(r)$  of radius $r$ in the complex hyperbolic space $\mathbb{CH}^n$ with its Fubiny-Study metric $g_{fs}$. Denote by $g_r$ the induced metric on the embedded spheres, then one can see without difficulties using the computations in \cite{Bour-Kar} that
$$\Riem(g_r)=-2n(n-1)\tanh^2r+(n-1)(2n-1).$$
As $r\to 0$, $\Riem(g_r)$ tends to $N$ which is the $\Riem$ of the standard round metric on the sphere $S^{2n-1}$, this is expected as explained above.  However, as $\tanh^2r$ tends to $\frac{2n-1}{2n}$ from the left, one has $\Riem(g_r)$ tends to $0$ from above.
This completes the proof.
\end{proof}
It is an open question to decide whether in general (or otherwise, for which manifolds) the range of the $\Riem$ functional on a compact PSC manifold $M$ of dimension $\geq 3$ is always the interval from $0$ excluded to $\bRiem(M)$ included or excluded.

\section*{Acknowledgement}
I would like to thank Professor Boris Botvinnik for his interest in this work and for his suggestion to use the term stratification to descibe this work.
\section*{Statements and Declarations}
The author declares that no funds, grants, or other support were received during the preparation of this manuscript.
\section*{Competing Interests}
The author have no relevant financial or non-financial interests to disclose.
\section*{Data Availability}
Data sharing not applicable to this article as no datasets were generated or analysed during the current study.

\end{document}